\newtheorem{theorem}{Theorem}
\newtheorem{lemma}{Lemma}
\title{Solubility of Additive Sextic Forms over $\mathbb{Q}_2(\sqrt{-1})$ and $\mathbb{Q}_2(\sqrt{-5})$}
\author{Drew Duncan \and David B. Leep}
\begin{document}

\maketitle

\begin{abstract}

Michael Knapp, in a previous work, conjectured that every additive sextic form over $\mathbb{Q}_2(\sqrt{-1})$ and $\mathbb{Q}_2(\sqrt{-5})$ in seven variables has a nontrivial zero. In this paper, we show that this conjecture is true, establishing that $\Gamma^*(6, \mathbb{Q}_2(\sqrt{-1})) = \Gamma^*(6, \mathbb{Q}_2(\sqrt{-5})) = 7 $.
\end{abstract}

\section{Introduction}
For equations of the form
\begin{equation}
\label{one}
a_1x_1^d + a_2x_2^d + \ldots + a_sx_s^d = 0
\end{equation}
with $a_i$ belonging to some $\pi$-adic field $K$ (an algebraic extension of $\mathbb{Q}_p$, the field of $p$-adic numbers), let $\Gamma^*(d, K)$ represent the minimum number of variables $s$ such that equation (\ref{one}) with exponent $d$ is guaranteed to have a nontrivial solution, regardless of the choice of the coefficients $a_i$.

In \cite{davenport1963homogeneous}, Davenport and Lewis defined an operation of \textit{contraction} which can be applied to such equations, and used this to demonstrate that $\Gamma^*(d, \mathbb{Q}_p) \le d^2+1$ for every prime $p$.  Furthermore, they demonstrated that equality holds whenever $d+1 = p$.  However, it is often the case that a much smaller number of variables suffices to guarantee the existence of a nontrivial solution, and so one may ask for an exact value for $\Gamma^*(d, K).$

Knapp \cite{knapp2016solubility}, using the method of contraction, showed that for all six ramified quadratic extensions $K$ of $\mathbb{Q}_2$, we have $\Gamma^*(6,K) \le 9$, with equality holding for $\mathbb{Q}_2(\sqrt{2})$, $\mathbb{Q}_2(\sqrt{-2})$, $\mathbb{Q}_2(\sqrt{10})$, and $\mathbb{Q}_2(\sqrt{-10})$.  For the remaining two extensions ($\mathbb{Q}_2(\sqrt{-1})$ and $\mathbb{Q}_2(\sqrt{-5})$), Knapp further showed that $\Gamma^*(6,K) \ge 7$ and conjectured that $\Gamma^*(6,K) = 7$.  Indeed, it is straightforward to see that $x_0^6 + \pi x_1^6 + \ldots + \pi^5 x_5^6 = 0$ has no nontrivial solution over the fields considered.  In this paper, we show that this conjecture is correct.

\begin{theorem}
We have $$\Gamma^*(6, \mathbb{Q}_2(\sqrt{-1})) = \Gamma^*(6, \mathbb{Q}_2(\sqrt{-5})) = 7. $$
\end{theorem}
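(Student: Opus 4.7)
The plan is to establish the upper bound $\Gamma^*(6,K) \le 7$ for $K \in \{\mathbb{Q}_2(\sqrt{-1}), \mathbb{Q}_2(\sqrt{-5})\}$, since the matching lower bound $\Gamma^*(6,K) \ge 7$ is already noted (via the form $\sum_{j=0}^5 \pi^j x_j^6$). Thus, given any additive sextic $F(x) = \sum_{i=1}^7 a_i x_i^6$ over $K$ with $a_i \neq 0$, we must produce a nontrivial zero. After dividing by a common $\pi$-power and reindexing, we may write $a_i = \pi^{e_i} u_i$ with $e_i \in \{0,1,2,3,4,5\}$ and $u_i \in \mathcal{O}_K^\times$, which partitions the seven variables into six valuation classes $C_0, \dots, C_5$. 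By pigeonhole, some $C_j$ contains at least two variables; more generally, we track the \emph{type} of $F$, meaning the tuple $(|C_0|, \dots, |C_5|)$.

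The main tool is the contraction method of Davenport--Lewis as refined by Knapp. If two variables $x_i, x_j \in C_\ell$ have coefficients whose ratio is a $6$th power in $K$, the sub-form $a_i x_i^6 + a_j x_j^6$ already has an obvious nontrivial zero. Otherwise we apply a substitution $x_i = \pi y$, etc., to exchange a pair in $C_\ell$ for a single new variable lying in $C_{\ell'}$ for some $\ell' > \ell$, obtaining a form with one fewer variable but a modified type. Iterating this carefully --- and interspersing it with the direct vanishing criterion --- should reduce every type to one for which a solution can be written down.

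A prerequisite for executing this plan is a precise understanding of the $6$th-power classes in $K$. By the local index formula $[K^\times : (K^\times)^6] = 6 \cdot |6|_K^{-1} \cdot |\mu_6(K)|$ applied to our ramified quadratic $K$ (with $|2|_K = 1/4$ and $\mu_6(K) = \{\pm 1\}$, since the residue field is $\mathbb{F}_2$), we obtain $[K^\times : (K^\times)^6] = 48$. I would fix an explicit list of $48$ coset representatives of $K^\times / (K^\times)^6$, and in particular a list of unit representatives, adapted to each of the two fields. I would then accompany this with a Hensel lifting criterion tuned to $d=6$, $p=2$, $e=2$: since $v_K(6) = 2$, lifting a zero of a single term requires an approximate zero modulo $\pi^{N}$ for an explicit $N$ depending on the valuation of $6 a_i x_i^5$ at the approximate zero.

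The principal obstacle will be the combinatorial explosion of the case analysis. With up to six valuation classes and up to a dozen relevant unit cosets in each, the number of types $(|C_0|, \dots, |C_5|)$ with $\sum |C_j| = 7$ is already large, and each must be settled either by exhibiting a contraction that simplifies the type or by producing, perhaps after a Hensel lift, a zero supported on a specific sub-collection of variables. The reason the conjecture holds for $\mathbb{Q}_2(\sqrt{-1})$ and $\mathbb{Q}_2(\sqrt{-5})$ but fails for the other four ramified quadratic extensions of $\mathbb{Q}_2$ must be visible here as a richer stock of unit pairs whose ratios are $6$th powers; I expect the bulk of the technical work to consist of carefully identifying this extra structure and verifying, type by type, that it suffices to contract seven variables down to a soluble configuration.
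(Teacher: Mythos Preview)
Your overall strategy is right---lower bound from the anisotropic form $\sum_{j=0}^{5}\pi^{j}x_j^{6}$, upper bound via contraction plus a Hensel lift---and this is exactly the arc the paper follows. But the proposal has a real gap: you have not identified the one structural fact that separates $\mathbb{Q}_2(\sqrt{-1})$ and $\mathbb{Q}_2(\sqrt{-5})$ from the other four ramified quadratic extensions, and your plan to tabulate all $48$ cosets of $K^{\times}/(K^{\times})^{6}$ is both far finer than what is needed and, as you already anticipate, combinatorially hopeless in this form.

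The paper (following Knapp) tracks only a single \emph{binary} invariant within each level---the ``$\pi$-coefficient'', i.e.\ the second $\pi$-adic digit of the coefficient---rather than the full unit coset. The contraction rules are then sharp: two variables at level $k$ with different $\pi$-coefficients contract to level $k+1$ with a $\pi$-coefficient of one's choosing; two with the same $\pi$-coefficient contract to level exactly $k+2$, or alternatively to some level $\ge k+3$. The decisive extra ingredient, valid \emph{only} for $\mathbb{Q}_2(\sqrt{-1})$ and $\mathbb{Q}_2(\sqrt{-5})$, is Knapp's Lemma~8: among any three variables at level $k$ with the same $\pi$-coefficient, two can be contracted to a variable at level $\ge k+4$. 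This jump-by-four ``$t$-contraction'' is precisely the special structure you are looking for, not a diffuse ``richer stock of unit pairs whose ratios are sixth powers.'' Coupled with the Hensel criterion that raising a primary variable five levels produces a zero, it is what makes the case analysis over level-distributions $(s_0,\dots,s_5)$ and $\pi$-coefficient splits terminate. Without naming this lemma your plan has no mechanism to distinguish the two target fields from the four where $\Gamma^{*}=9$, so the proposal as written cannot close.
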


\section{Preliminary Concepts}

Let $K$ denote one of the ramified quadratic extensions $\mathbb{Q}_2(\sqrt{-1})$ or $\mathbb{Q}_2(\sqrt{-5})$, and $\mathcal{O}$ denote its ring of integers.  Without loss of generality, assume $a_i \in \mathcal{O}$.  For the remainder, assume that the number of variables $s$ in equation (\ref{one}) is 7 and the exponent $d$ is 6, i.e,
\begin{equation}
\label{eq}
a_1x_1^6 + a_2x_2^6 + \ldots + a_7x_7^6 = 0.
\end{equation}
Let $\pi$ be a uniformizer (generator of the unique maximal ideal) of $\mathcal{O}$, so that for any $c \in \mathcal{O}$, $c$ can be written $c = c_0 + c_1\pi + c_2\pi^2 + c_3\pi^3 + \ldots$, with $c_i \in \{0,1\}$.

Factoring out the highest power of $\pi$, the coefficient $c$ of any variable $x$ can be written in the form $c = \pi^r(c_0 + c_1\pi + c_2\pi^2 + c_3\pi^3 + \ldots)$, $c_0 = 1$.  Such a variable is said to be at \textit{level $r$}, and the value of $c_1$ is said to be its \textit{$\pi$-coefficient}.  By the simple change of variables $\pi^r x^6 = \pi^{r-6}(\pi x)^6 = \pi^{r-6}y^6$, we may rewrite (\ref{eq}) so that every variable has level $0 \le r \le 5$.

Because the proof of Knapp's conjecture involves inspecting numerous subcases, it is useful to introduce a compact notation to convey important information about particular forms and contractions performed on them.  I use the notation $(s_0, s_1, s_2, s_3, s_4, s_5)$ to indicate any form having \textit{at least} $s_i$ variables at level $i$.  For example, $(4, 2, 1, 0, 0, 0)$ indicates a form with at least four variables at level 0, at least two variables at level 1, at least one variable at level 2, and as few as no variables at any higher level.  As stated above, it should be assumed in this paper that the total number of variables always adds to 7, even when fewer than 7 are indicated by this notation.

To indicate that variables in a particular level have different $\pi$-coefficients, the number of variables in that level are partitioned into two numbers stacked vertically.  For example, $(^3_1, 2, 1, 0, 0, 0)$ indicates a form as above with four variables in level 0, three of which are in one $\pi$-coefficient class, and one of which is in the other.  Note that this notation does not give any indication of which $\pi$-coefficient class contains which number of variables.

By multiplying through by $\pi$, we may increase the level of each variable by 1, and then by a change of variables, change the level of all variables in level 6 to level 0.  To indicate the change that this produces, I use an arrow, with a \textit{c} above:

$$ (4,2,1,0,0,0) \xrightarrow{c} (0,4,2,1,0,0).$$

Applying this change multiple times (which will also be indicated by a  \textit{c}), we may effect a cyclic permutation of the levels of the variables (cf. Lemma 3 in \cite{knapp2016solubility}).  This is often useful, e.g, to move the level with a certain number of variables to level 0.  For example:

$$ (0,0,4,2,1,0) \xrightarrow{c} (4,2,1,0,0,0).$$

In order to construct a nontrivial solution to (\ref{eq}), we combine partial solutions using the operation of contraction.  Suppose we have a nontrivial solution modulo a power of $\pi$,  $$a_{t_1}\alpha_1^6 + \ldots + a_{t_k}\alpha_k^6 = \pi^j m, \pi \nmid m$$ for some subset of terms of (\ref{eq}) and choice of $\alpha_i$.  The change of variables $x_{t_i} = \alpha_i y$ gives $$a_{t_1}(\alpha_1 y)^6 + \ldots + a_{t_k}(\alpha_k y)^6 = (\pi^j m )y^6.$$  In this way, the variables $x_{t_1}, \ldots, x_{t_k}$ are replaced with a new variable $y$ at level $j.$  For the purposes of this proof, all contractions are of two variables in the same level.  Knapp outlines the four types of contraction which I use for this proof in his Lemmas 5 and 8 of \cite{knapp2016solubility}. I restate them verbatim here as Lemmas \ref{two} and \ref{three}, without proof, and they will hitherto be referenced by the numbering of this paper.

\begin{lemma}
\label{two}
Suppose that $x$ and $y$ are variables at level $k$.

If $x$ and $y$ have different $\pi$-coefficients, then they can be contracted to a variable $T$ at level $k + 1$. Moreover, we can arrange so that $T$ has whichever $\pi$-coefficient we like.

If $x$ and $y$ have the same $\pi$-coefficient, then they can be contracted to a variable $T$ at level $k + 2$.

Also, in this case they can be contracted to a variable $T$ at level at least $k + 3$.
  
We note that in the case where $x$ and $y$ have the same $\pi$-coefficient, we cannot control the $\pi$-coefficient of $T$. Moreover, if we contract to level at least $k + 3$, then we cannot control the exact level of $T$.
\end{lemma}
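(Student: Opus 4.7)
The natural plan is to perform each contraction explicitly by writing $x = \alpha T$, $y = \beta T$ for units $\alpha,\beta \in \mathcal{O}^\times$, reducing the lemma to computing the $\pi$-adic valuation of $u\alpha^6 + v\beta^6$, where the original terms at level $k$ are $\pi^k u x^6 + \pi^k v y^6$. Since $K$ is a ramified quadratic extension of $\mathbb{Q}_2$ with residue field $\mathbb{F}_2$, the element $2$ has valuation exactly $2$, i.e., $2 = \epsilon \pi^2$ for some unit $\epsilon \equiv 1 \pmod{\pi}$, and this single arithmetic fact drives every case.

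The key technical ingredient is the expansion of a sixth power of a $1$-unit modulo $\pi^3$. For $\gamma = 1 + \gamma_1\pi + \gamma_2\pi^2 + \cdots$ with $\gamma_i \in \{0,1\}$, the binomial expansion yields
\begin{align*}
\gamma^6 \equiv 1 + 6\gamma_1\pi + 15\gamma_1^2\pi^2 \pmod{\pi^3}.
\end{align*}
Since $6$ has valuation $2$, the middle term lies in $\pi^3\mathcal{O}$; since $15 \equiv 1 \pmod{\pi}$ and $\gamma_1^2 = \gamma_1$, this collapses to $\gamma^6 \equiv 1 + \gamma_1\pi^2 \pmod{\pi^3}$. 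Combined with $u, v \equiv 1 \pmod{\pi}$, this yields the master formula $u\alpha^6 + v\beta^6 \equiv (u+v) + (\alpha_1+\beta_1)\pi^2 \pmod{\pi^3}$.

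Each assertion now follows from an appropriate choice of $\alpha_1, \beta_1 \in \{0,1\}$. For part~(a), when the $\pi$-coefficients of $u$ and $v$ differ, $u+v$ has valuation exactly $1$ (the $\pi$-terms contribute a unit while $2$ has valuation $2$), so $\alpha=\beta=1$ produces a level-$(k+1)$ contraction; toggling $\alpha_1+\beta_1$ between $0$ and $1$ shifts the $\pi^2$-term of $u\alpha^6+v\beta^6$ and hence the $\pi$-coefficient of $(u\alpha^6+v\beta^6)/\pi$, giving either possible $\pi$-coefficient of $T$. For part~(b), when the $\pi$-coefficients agree, $u+v$ has valuation at least $2$, so $\alpha=\beta=1$ yields a contraction at level $k+2$. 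For part~(c), the master formula shows that choosing $\alpha_1, \beta_1$ so that $\alpha_1+\beta_1$ cancels the residue class modulo $2$ of the $\pi^2$-coefficient of $u+v$ drives $u\alpha^6+v\beta^6$ into $\pi^3\mathcal{O}$, producing a contraction at level at least $k+3$.

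The main difficulty is the careful $\pi$-adic bookkeeping past the point where $2$ first enters: one must verify that $6\gamma_1\pi$ is already $\pi^3$-small while $15\gamma_1^2\pi^2$ is not, and that the $\pi^2$-contributions from $u+v$ and from the sixth-power adjustment are both $\{0,1\}$-valued modulo $\pi$, so that a choice of $\alpha_1, \beta_1$ can always cancel them. The negative clauses of the lemma (no control of the $\pi$-coefficient in case~(b), no control of the exact level in case~(c)) are warnings attached to the positive constructions above and require no separate argument.
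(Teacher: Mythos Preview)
The paper does not prove this lemma; it quotes it verbatim from Knapp's Lemmas~5 and~8 in \cite{knapp2016solubility} and explicitly omits the proof. Your direct computation is therefore not being compared against anything in the present paper, but as a stand-alone argument it is essentially correct: the reduction $\gamma^6 \equiv 1 + \gamma_1\pi^2 \pmod{\pi^3}$ (driven by $v(6)=2$ and $15\equiv 1\pmod{\pi}$) and the resulting master formula $u\alpha^6+v\beta^6\equiv (u+v)+(\alpha_1+\beta_1)\pi^2\pmod{\pi^3}$ are exactly the right engine for all three clauses.

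One small slip: in part~(b) you assert that $\alpha=\beta=1$ already gives a contraction at level exactly $k+2$, but your master formula only guarantees that $u+v$ has valuation at least $2$. Writing $u=1+u_1\pi+u_2\pi^2+\cdots$ and $v=1+v_1\pi+v_2\pi^2+\cdots$ with $u_1=v_1$, and using $2\equiv\pi^2\pmod{\pi^3}$, one finds $u+v\equiv(1+u_2+v_2)\pi^2\pmod{\pi^3}$; if $u_2\ne v_2$ this vanishes modulo $\pi^3$ and the valuation overshoots. The repair is already in your toolkit: toggle $\alpha_1+\beta_1$ so that the $\pi^2$-term is \emph{not} cancelled. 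The two parities of $\alpha_1+\beta_1$ give $\pi^2$-residues in $\mathbb{F}_2$ differing by $1$, so exactly one choice lands at valuation $2$ and the other at valuation at least $3$; this single observation handles parts~(b) and~(c) uniformly.
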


I indicate a contraction of two variables with different $\pi$-coefficients with an arrow and a \textit{d} above, e.g.,

$$(^3_1, 2, 1, 0, 0, 0) \xrightarrow{d} (2, 3, 1, 0, 0, 0).$$

I indicate a contraction of two variables with the same $\pi$-coefficients resulting in a variable exactly 2 levels higher with \textit{s2} above the arrow, e.g.,

$$(^3_1, 2, 1, 0, 0, 0) \xrightarrow{s2} (^1_1, 2, 2, 0, 0, 0).$$

I indicate a contraction of two variables with the same $\pi$-coefficients resulting in a variable at least 3 levels higher with \textit{s3} above the arrow, e.g.,

$$(^3_1, 2, 1, 0, 0, 0) \xrightarrow{s3} (^1_1, 2, 1, 1, 0, 0).$$

Note that in the case of an \textit{s3}-contraction, the example given is just one of the possible outcomes of the contractions, since the resulting variable may appear in any level at least 3 higher than the level of the contracted variables.

\begin{lemma}
\label{three}
Suppose that the form contains at least 3 variables at level $k$ which all have the same $\pi$-coefficient.  Suppose also that the coefficients belong to $\mathbb{Q}_2(\sqrt{-1})$ or $\mathbb{Q}_2(\sqrt{-5})$.  Then there are two variables at level $k$ which can be contracted to a variable at level at least $k + 4$.
\end{lemma}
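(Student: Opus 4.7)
The plan is to reduce the contraction problem to a calculation in the finite group $(\mathcal{O}/\pi^4\mathcal{O})^\times$. After dividing through by $\pi^k$, write the three coefficients at level $k$ as units $u_1,u_2,u_3$ with $u_1\equiv u_2\equiv u_3\pmod{\pi^2}$ (the same-$\pi$-coefficient hypothesis). Performing the substitution $x_i=\alpha y,\;x_j=\beta y$ yields a single variable at level at least $k+4$ precisely when $u_i\alpha^6+u_j\beta^6\equiv 0\pmod{\pi^4}$ for some primitive $(\alpha,\beta)\in\mathcal{O}^2$; a valuation count forces both $\alpha$ and $\beta$ to be units (otherwise one term is a unit and the other has valuation $\ge 6$, and the sum cannot vanish mod $\pi^4$), so the condition rearranges to: $-u_j/u_i$ is a sixth power in $(\mathcal{O}/\pi^4\mathcal{O})^\times$.

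First I would pin down this sixth-power subgroup. Since the residue field is $\mathbb{F}_2$ we have $|(\mathcal{O}/\pi^4\mathcal{O})^\times|=8$. The distinguishing feature of the two target fields, which I plan to exploit, is the congruence $(1+\pi)^2\equiv -1\pmod{\pi^4}$. I would verify this by writing $2$ explicitly in terms of $\pi$: for $\mathbb{Q}_2(\sqrt{-1})$, $\pi=1+i$ gives $\pi^2=2i$ and $2=(1-\pi)\pi^2=\pi^2-\pi^3$, so $(1+\pi)^2=3+4i\equiv -1\pmod{\pi^5}$; for $\mathbb{Q}_2(\sqrt{-5})$, $\pi=1+\sqrt{-5}$ gives $(1+\pi)^2=-1+4\sqrt{-5}\equiv -1\pmod{\pi^4}$ since $v(4)=4$. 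Two consequences follow: $1+\pi$ has order $4$, so $(\mathcal{O}/\pi^4\mathcal{O})^\times\cong\mathbb{Z}/4\times\mathbb{Z}/2$ and the squaring map has image of order $2$, namely $\{1,-1\}$; and because $\gcd(3,8)=1$, cubing is a bijection, so the sixth-power image coincides with the squaring image, $\{1,-1\}=\{1,\,1+\pi^2+\pi^3\}$.

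The endgame is pigeonhole. The hypothesis places each $u_i$ in $1+\pi^2\mathcal{O}$, so every ratio $u_j/u_i$ lies in $G:=(1+\pi^2\mathcal{O})/(1+\pi^4\mathcal{O})$, an elementary abelian group of order $4$ with elements $\{1,\,1+\pi^2,\,1+\pi^3,\,1+\pi^2+\pi^3\}$. The "good" set, namely those ratios $r$ for which $-r$ is a sixth power, equals $\{-1\}\cdot\{1,-1\}=\{1,-1\}=\{1,\,1+\pi^2+\pi^3\}$, and this is precisely the index-$2$ subgroup $S\subset G$. Projecting $u_1,u_2,u_3$ to the two-element quotient $G/S$ and applying pigeonhole, some two of them, say $u_i$ and $u_j$, satisfy $u_j/u_i\in S$. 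If $u_j\equiv -u_i\pmod{\pi^4}$ I take $\alpha=\beta=1$; if $u_j\equiv u_i\pmod{\pi^4}$ I take $\alpha=1+\pi,\,\beta=1$ and use $(1+\pi)^6\equiv(-1)^3=-1\pmod{\pi^4}$. In either case $u_i\alpha^6+u_j\beta^6\equiv 0\pmod{\pi^4}$, and restoring the factor $\pi^k$ yields the desired contraction of $x_i,x_j$ to a variable at level at least $k+4$.

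The main obstacle is the sharp identity $(1+\pi)^2\equiv -1\pmod{\pi^4}$, which is exactly what singles out $\mathbb{Q}_2(\sqrt{-1})$ and $\mathbb{Q}_2(\sqrt{-5})$ among the six ramified quadratic extensions of $\mathbb{Q}_2$. In $\mathbb{Q}_2(\sqrt{2})$, by contrast, $2=\pi^2$ exactly, so $-1\equiv 1+\pi^2\not\equiv 1+\pi^2+\pi^3\pmod{\pi^4}$; the squares land in a different coset of $G$, the good set $\{1+\pi^2,\,1+\pi^3\}$ is no longer a subgroup, and the pigeonhole step collapses. Once the key identity is verified in each of the two relevant fields, the remainder of the argument is routine bookkeeping in an $8$-element abelian group.
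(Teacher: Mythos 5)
The paper does not contain a proof of this lemma at all: it is copied verbatim from Lemma~8 of Knapp's paper and invoked without proof (``I restate them verbatim here as Lemmas~\ref{two} and \ref{three}, without proof''). So there is no in-paper argument to compare yours against; what you have written is a genuinely self-contained fill-in. Having checked it, your argument is correct. The reduction of a two-variable contraction to the statement that $-u_j/u_i$ is a sixth power in $(\mathcal O/\pi^4\mathcal O)^\times$ is right (the valuation count ruling out non-unit $\alpha,\beta$ is sound), the identity $(1+\pi)^2\equiv -1\pmod{\pi^4}$ does hold in both $\mathbb Q_2(\sqrt{-1})$ (where $(2+i)^2=3+4i=-1+4\pi$ and $v(4\pi)=5$) and $\mathbb Q_2(\sqrt{-5})$ (where $(2+\sqrt{-5})^2=-1+4\sqrt{-5}$ and $v(4)=4$), and with it the sixth-power image of $(\mathcal O/\pi^4\mathcal O)^\times\cong\mathbb Z/4\times\mathbb Z/2$ is $\{1,-1\}$, an index-two subgroup of $U_2=(1+\pi^2\mathcal O)/(1+\pi^4\mathcal O)$. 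Your pigeonhole step (two of the three units lie in the same coset of this subgroup, hence their ratio is $\pm1$ mod $\pi^4$) then produces the required pair, with $\alpha=\beta=1$ in the $-1$ case and $\alpha=1+\pi$, $\beta=1$ in the $+1$ case. Your closing remark about $\mathbb Q_2(\sqrt 2)$, where $-1\equiv 1+\pi^2$ puts the "good" set in a non-subgroup coset, correctly explains why the lemma is restricted to these two fields.

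One tiny imprecision to fix: you write that "the hypothesis places each $u_i$ in $1+\pi^2\mathcal O$." That is only true if the common $\pi$-coefficient is $0$; if it is $1$, each $u_i$ lies in $1+\pi+\pi^2\mathcal O$. What the hypothesis actually gives is that each ratio $u_j/u_i$ lies in $1+\pi^2\mathcal O$, which is the statement you use. Either rephrase to refer to ratios directly, or normalize first by replacing each $u_i$ by $u_i/u_1$.
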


I indicate such a contraction with \textit{t} above the arrow, e.g.,

$$(^3_1, 2, 1, 0, 0, 0) \xrightarrow{t} (^1_1, 2, 1, 0, 1, 0).$$

Contractions can be used to demonstrate the existence of a nontrivial solution by the following consequence of Hensel's Lemma specialized to exponent $d=6$, and fields $k=\mathbb{Q}_2(\sqrt{-1})$ and $k=\mathbb{Q}_2(\sqrt{-5})$ (cf. Lemma 4 of \cite{knapp2016solubility}).

\begin{lemma}[Hensel's Lemma]
Let $x_i$ be a variable of (\ref{eq}) at level $h$.  Suppose that $x_i$ can be used in a contraction of variables (or one in a series of contractions) which produces a new variable at level at least $h+5$.  Then (\ref{eq}) has a nontrivial solution.
\end{lemma}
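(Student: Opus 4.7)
The plan is to convert the contraction data into an explicit near-solution of (\ref{eq}) and then upgrade it to an actual solution by applying Hensel's lemma in one variable. First I would unfold the (possibly iterated) contractions involving $x_i$ into a single simultaneous substitution $x_{t_l} = \alpha_l y$ on some subset $\{x_{t_1},\ldots,x_{t_k}\}$ of original variables containing $x_i$. The very definition of contraction then supplies an identity
\[
\sum_{l=1}^{k} a_{t_l}\alpha_l^6 \;=\; \pi^{h'} m,\qquad h'\ge h+5,\ m\in \mathcal{O}^\times.
\]
Because every contraction in Lemmas \ref{two} and \ref{three} is performed on two variables at the \emph{same} level (so both summands contribute equally to the valuation of the partial sum), each individual substitution coefficient is a unit, and this property is preserved when we compose a chain of contractions; in particular $\alpha_i\in\mathcal{O}^\times$.

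Next I would test the candidate assignment $x_j=0$ for $j\notin\{t_1,\ldots,t_k\}$, $x_{t_l}=\alpha_l$ for $l\ne i$, and $x_i=\beta$ with $\beta\in\mathcal{O}$ to be determined. Writing $a_i=\pi^h u_i$ and using the identity above, equation (\ref{eq}) collapses to
\[
\beta^6 \;=\; \alpha_i^6\Bigl(1-\tfrac{\pi^{h'-h} m}{u_i\alpha_i^6}\Bigr),
\]
so existence of $\beta$ reduces to showing that an element of the form $1+\pi^{k}w$, where $k=h'-h\ge 5$ and $w\in\mathcal{O}$, is a sixth power in $\mathcal{O}$. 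I would apply Hensel's lemma to $g(x)=x^6-(1+\pi^{k}w)$ at $x_0=1$. The key arithmetic input is that both $\mathbb{Q}_2(\sqrt{-1})$ and $\mathbb{Q}_2(\sqrt{-5})$ are \emph{ramified} quadratic extensions of $\mathbb{Q}_2$, so $v(2)=2$ and hence $v(6)=2$. Then $v(g(1))=k+v(w)\ge 5$ while $v(g'(1))=v(6)=2$, so $v(g(1))>2v(g'(1))$—exactly the standard Hensel criterion. The resulting root satisfies $\beta\equiv\alpha_i\pmod{\pi^3}$ and is therefore a unit, so the assembled solution $(x_1,\ldots,x_7)$ is nontrivial.

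The main (indeed only) substantive point is the ramification identity $v(6)=2$; this is precisely what pins down the threshold $h+5$ in the statement, since Hensel's criterion $k>2v(6)=4$ forces $k\ge 5$. The minor bookkeeping step is to verify that unfolding a series of contractions keeps $\alpha_i$ a unit, but this follows by induction from the description in Lemmas \ref{two} and \ref{three}: each individual contraction of two same-level variables uses unit substitution coefficients, so their composition does as well. No nontrivial case analysis is needed.
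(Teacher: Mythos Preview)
Your argument is correct and is the standard one: unfold the contractions into a single substitution, isolate the variable $x_i$, and reduce to showing $1+\pi^{\ge 5}\mathcal{O}\subset(\mathcal{O}^\times)^6$ via the one-variable Hensel criterion $v(g(1))>2v(g'(1))$, using the ramification identity $v(6)=v(2)=2$. The only point worth noting is that your claim ``each $\alpha_l$ is a unit'' indeed follows immediately from the fact that both inputs to every contraction in Lemmas~\ref{two} and~\ref{three} lie at the same level: if one substitution coefficient were a non-unit, its term would have valuation $\ge k+6$ and could not cancel against the other term of valuation $k$, so no level would be gained.

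As for comparison: the paper does not actually prove this lemma---it is stated as a specialization of Hensel's Lemma with a pointer to Lemma~4 of Knapp~\cite{knapp2016solubility}. Your write-up supplies exactly the details that the citation covers, so there is no divergence in approach, only in explicitness.
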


The aim is then to show that a series of contractions can be performed which ``raises" a variable five levels.  To simplify showing that such a series of contractions can be formed, one may designate a level $k$.  Thus, by showing that contractions involving at least one variable from the designated level produce a variable at level $k+5$, a nontrivial solution follows.  To further simplify the counting of levels that a variable is raised, I borrow the notion of \textit{primary variable} from \cite{davenport1963homogeneous}.  A variable in the designated level is considered to be a primary variable, as is any variable formed from a contraction involving a primary variable.  The presence of a primary variable in level $k+5$ indicates the existence of a nontrivial solution.  Further, it is often convenient to apply a cyclic permutation to the level of the variables and then designate level 0.  In this way, we may show the existence of a nontrivial solution by raising a primary variable to level 5.  I indicate with an asterisk a level (or $\pi$-coefficient class within a level) which contains a primary variable.  For example:

$$(^3_1, 1, 1, 0, 1, 0) \xrightarrow{t} (^1_1, 1, 1, 0, 2*, 0).$$

Note that if any contraction produces a new variable 5 levels up, a nontrivial solution immediately follow by Hensel's Lemma.  Thus in the proofs below it is always assumed that an \textit{s3}-contraction produces a variable exactly 3 or 4 levels up, and that a \textit{t}-contraction produces a variable exactly 4 levels up.

\section{Proof of the conjecture}

\begin{lemma} \label{max7}
Suppose that (\ref{eq}) has at least seven variables at the same level. Then (\ref{eq}) has a nontrivial solution.
\end{lemma}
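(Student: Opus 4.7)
The plan is to apply the cyclic operation $c$ enough times to move all seven variables to level $0$, so the form is $(7,0,0,0,0,0)$. Designate level $0$ as the primary level, so that every variable is initially primary; it then suffices to produce any primary variable at level $\ge 5$, after which Hensel's Lemma delivers a nontrivial solution. The strategy is to generate two primary variables at level $4$ by two applications of the $t$-contraction, and then contract them together to reach level $\ge 5$.

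First I would partition the seven level-$0$ variables into their two $\pi$-coefficient classes of sizes $a$ and $7-a$ with $a\ge 4$. Because the majority class has $a\ge 3$ members, Lemma \ref{three} lets me $t$-contract two of them into a primary variable $T_1$ at level $\ge 4$. If $T_1$ already lies at level $\ge 5$ the result is immediate, so I may assume $T_1$ sits at exactly level $4$; at level $0$ I now have $a-2$ variables in one $\pi$-coefficient class and $7-a$ in the other.

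The key combinatorial check is that some $\pi$-coefficient class at level $0$ still contains at least three variables. If $a\in\{5,6,7\}$ the majority class retains $a-2\ge 3$; if $a=4$ the minority class has $7-a=3$. Hence Lemma \ref{three} applies a second time, producing another primary variable $T_2$ at level $\ge 4$; again I may assume $T_2$ sits at exactly level $4$. At this stage $T_1$ and $T_2$ are two primary variables at the same level $4$, and Lemma \ref{two} contracts them (as a $d$-, $s2$-, or $s3$-contraction according to their $\pi$-coefficients) into a primary variable at level $\ge 5$, whence Hensel's Lemma closes the proof.

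I expect no serious obstacle: the entire argument is driven by the two $t$-contractions, and the only bookkeeping is the short case split on $a\in\{4,5,6,7\}$ that guarantees the second $t$-contraction is available. The outcomes of $t$ and $s3$ producing variables strictly higher than the guaranteed minimum can be ignored by the convention in the preliminaries, since any such higher jump would already furnish the required level-$5$ primary.
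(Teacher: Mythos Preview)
Your argument is correct, but it proceeds by a different mechanism than the paper's. The paper never invokes the $t$-contraction here: instead it uses pigeonhole to extract three same-$\pi$-coefficient pairs from the seven level-$0$ variables, performs three $s2$-contractions to obtain three primary variables at level~$2$, pigeonholes again to find two of these with the same $\pi$-coefficient, and finishes with a single $s3$-contraction to level $\ge 5$. Your route instead applies Lemma~\ref{three} twice to manufacture two primary variables at level~$4$ and then contracts them once more; the short case split on $a\in\{4,5,6,7\}$ correctly verifies that a $\pi$-coefficient class of size $\ge 3$ survives the first $t$-contraction. The paper's version has the mild advantage that it relies only on Lemma~\ref{two} and hence would go through over any of the six ramified quadratic extensions of $\mathbb{Q}_2$, whereas your argument leans on the field-specific Lemma~\ref{three}; on the other hand, your approach is essentially the same pattern the paper itself uses later in Lemma~\ref{max5same} (two $t$-contractions into level~$4$, then finish), so it meshes naturally with the rest of the exposition.
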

\begin{proof}
By the pigeonhole principle, there are three pairs of variables each sharing the same $\pi$-coefficient.  Perform three \textit{s2}-contractions on these three pairs to produce three new variables exactly 2 levels up.  Again, by the pigeonhole principle, two of these new variables have the same $\pi$-coefficient.  Perform an \textit{s3}-contraction on this pair of variables to produce a new variable at least 3 additional levels up.  A (nontrivial) solution follows from Hensel's Lemma.
\end{proof}

(Note that Lemma \ref{max7} is stated in greater generality than necessary, as we assume that every form considered in this proof has exactly seven variables.)

\begin{lemma}\label{donefrom4}
Designate level $k$.  If after a series of contractions there are at least two variables in level $k+4$, at least one of which is primary, then (\ref{eq}) has a nontrivial solution.
\end{lemma}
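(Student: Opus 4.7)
The plan is a short case analysis on the relationship between the $\pi$-coefficients of the two variables at level $k+4$. In both cases, a single further contraction produces a primary variable at level at least $k+5$, and Hensel's Lemma finishes the argument.

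First I would fix two variables at level $k+4$, say $u$ and $v$, with at least one of them (say $u$) being primary. In the case where $u$ and $v$ have different $\pi$-coefficients, Lemma \ref{two} allows a $d$-contraction that produces a new variable $T$ at level $k+5$. Since $T$ is produced by a contraction involving the primary variable $u$, $T$ is itself primary. Thus $T$ is a primary variable five levels above the designated level $k$, and Hensel's Lemma yields a nontrivial solution.

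In the remaining case, $u$ and $v$ share the same $\pi$-coefficient. Then by Lemma \ref{two} an $s2$-contraction of $u$ and $v$ yields a new variable $T$ at level exactly $k+6$, which is again primary because the contraction involves $u$. Since $k+6 > k+5$, Hensel's Lemma again produces a nontrivial solution.

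I do not anticipate a serious obstacle here; the only thing to be careful about is to verify that in each case the resulting variable inherits the ``primary'' designation, which follows immediately from the definition of a primary variable as any variable obtained from a contraction involving a primary variable. Together, the two cases cover every possibility for the $\pi$-coefficients at level $k+4$, so the lemma follows.
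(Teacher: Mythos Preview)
Your argument is correct and matches the paper's proof, which simply observes that any contraction of the primary variable at level $k+4$ with another variable at that level yields a (primary) variable at level at least $k+5$, whence Hensel's Lemma applies. Your explicit case split on the $\pi$-coefficients is just a slightly more detailed version of this same observation.
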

\begin{proof}
Any contraction involving the primary variable will result in a variable at level at least $k+5$.  The nontrivial solution follow from Hensel's lemma.
\end{proof}

\begin{lemma} \label{max5same}
Suppose that (\ref{eq}) has at least five variables at the same level with the same $\pi$-coefficient.  Then (\ref{eq}) has a nontrivial solution.
\end{lemma}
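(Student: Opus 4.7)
The plan is to use Lemma \ref{three} twice, designating the level $k$ where the five variables with a common $\pi$-coefficient sit, so that each of these five variables is primary.

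First I would apply Lemma \ref{three} to three of the five variables. Since they share a $\pi$-coefficient, this yields a $t$-contraction of two of them to a primary variable at level at least $k+4$. Under the standing worst-case assumption (from the end of Section 2), I may assume this new primary variable lands at exactly level $k+4$; otherwise Hensel's Lemma immediately gives a nontrivial solution. After this contraction, two of the original five variables are consumed, leaving $5-2 = 3$ variables at level $k$ that still share a $\pi$-coefficient, together with one primary variable at level $k+4$.

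Next I would apply Lemma \ref{three} again to these three remaining variables at level $k$, producing another variable at level at least $k+4$ (and primary, since it comes from primary variables). If this new variable lies at level $k+5$ or higher, Hensel's Lemma finishes the job. Otherwise it lies at exactly level $k+4$, in which case there are now two variables at level $k+4$, both of them primary, and Lemma \ref{donefrom4} gives the required nontrivial solution.

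There is no real obstacle here: the only point needing care is verifying that after the first $t$-contraction there are still at least three variables at level $k$ sharing a common $\pi$-coefficient, so that Lemma \ref{three} can be invoked a second time. This follows immediately because a $t$-contraction consumes only two variables. The entire argument is a short double application of Lemma \ref{three} combined with Lemma \ref{donefrom4}.
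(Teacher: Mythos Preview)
Your proof is correct and follows essentially the same approach as the paper: two successive $t$-contractions (Lemma \ref{three}) starting from the five same-$\pi$-coefficient variables, landing two primary variables at level $k+4$, and then invoking Lemma \ref{donefrom4}. The paper merely normalizes to $k=0$ first and writes the argument in its arrow notation, but the logic is identical.
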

\begin{proof}
Without loss of generality, assume that the 5 variables appear in level 0.  Then,
$$(^{5}_{0},0,0,0,0,0) \xrightarrow{t}
(^{3}_{0},0,0,0,1*,0) \xrightarrow{t}
(^{1}_{0},0,0,0,2*,0).$$
The result follows from Lemma \ref{donefrom4}.
\end{proof}

\begin{lemma} \label{donefrom2}
Designate level $k$.  If after a series of contractions there are at least two variables in the same level with the same $\pi$-coefficient, at least level $k+2$, at least one of which is primary, then (\ref{eq}) has a nontrivial solution.
\end{lemma}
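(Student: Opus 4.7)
The plan is to invoke an \textit{s3}-contraction on the two guaranteed variables and observe that the result lands at or above level $k+5$, which triggers Hensel's Lemma. Let $\ell \geq k+2$ be the common level of the two variables sharing a $\pi$-coefficient, with (say) the first of them being primary.

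First, I would apply the second part of Lemma~\ref{two}: because the two variables share a $\pi$-coefficient, they may be contracted via an \textit{s3}-contraction to a single new variable $T$ at level at least $\ell + 3$. Since $\ell \geq k+2$, we have that $T$ sits at level at least $\ell + 3 \geq k+5$. By the convention established in the preliminaries (a primary variable remains primary after any contraction it participates in), $T$ is primary.

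Next I would apply Hensel's Lemma (the version stated as the fourth lemma in the preliminaries): tracing back the ancestry of $T$, the underlying designated-level variable that made $T$ primary has been raised through the chain of contractions to level at least $k+5$, which is at least five levels above its original level $k$. This yields a nontrivial solution of (\ref{eq}).

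I do not anticipate a real obstacle here; the lemma is essentially a packaging of the observation that $\mathit{(\text{level }{\geq}\,k+2)} + \mathit{(\text{s3 gains }{\geq}\,3)} \geq k+5$. The only subtlety worth stating explicitly is that the \textit{s3}-contraction is available precisely because the two variables share a $\pi$-coefficient, and that primacy propagates along the contraction so that the newly created high-level variable counts as a primary variable for the purposes of the Hensel's Lemma step.
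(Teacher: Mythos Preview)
Your proof is correct and follows exactly the same approach as the paper: perform an \textit{s3}-contraction on the two same-$\pi$-coefficient variables to reach level at least $k+5$, then invoke Hensel's Lemma. The paper's own proof is simply a terser statement of what you wrote.
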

\begin{proof}
Perform an $s3$-contraction on the two variables.  The solution follows from Hensel's lemma.
\end{proof}

\begin{lemma} \label{donefrom2with3}
Designate level $k$.  If after a series of contractions there are at least three variables in the same level, at least level $k+2$, at least two of which are primary, then (\ref{eq}) has a nontrivial solution.
\end{lemma}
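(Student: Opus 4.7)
The plan is to apply pigeonhole on the $\pi$-coefficient classes of the three variables and then reduce to Lemma \ref{donefrom2}. Since $\mathcal{O}/\pi\mathcal{O}$ has only two residue classes, every variable falls into one of exactly two $\pi$-coefficient classes, so among any three variables in the same level at least two must share the same $\pi$-coefficient.

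First I would name the three variables, say $v_1, v_2, v_3$, situated in a common level $\ell \ge k+2$, and assume without loss of generality that $v_1$ and $v_2$ are the two primary variables. I then split into two cases according to the $\pi$-coefficients of $v_1$ and $v_2$. If $v_1$ and $v_2$ happen to share the same $\pi$-coefficient, the conclusion is immediate: a pair of same-$\pi$-coefficient variables in level $\ell \ge k+2$ with at least one (in fact both) primary triggers Lemma \ref{donefrom2}, yielding a nontrivial solution via an $s3$-contraction to a level $\ge \ell + 3 \ge k+5$.

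If instead $v_1$ and $v_2$ have different $\pi$-coefficients, then by pigeonhole $v_3$ must share its $\pi$-coefficient with either $v_1$ or $v_2$. Either way we obtain two variables in level $\ell \ge k+2$ with the same $\pi$-coefficient, at least one of which (namely whichever of $v_1, v_2$ was paired with $v_3$) is primary. Once again Lemma \ref{donefrom2} produces the desired nontrivial solution.

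There is no real obstacle here: the argument is a one-step pigeonhole reduction to Lemma \ref{donefrom2}, and the only subtlety is making sure that the pair selected by pigeonhole actually contains a primary variable, which is guaranteed because two of the three original variables were primary to begin with.
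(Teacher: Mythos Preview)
Your proposal is correct and follows exactly the paper's approach: the paper's proof is simply ``This follows from Lemma \ref{donefrom2} and the pigeonhole principle.'' Your write-up just makes explicit the one detail the paper leaves implicit, namely that because two of the three variables are primary, any pigeonhole pairing by $\pi$-coefficient must contain a primary variable.
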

\begin{proof}
This follows from Lemma \ref{donefrom2} and the pigeonhole principle.
\end{proof}

\begin{lemma}\label{max6}
Suppose that (\ref{eq}) has at least six variables at the same level.  Then (\ref{eq}) has a nontrivial solution.
\end{lemma}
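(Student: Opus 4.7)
The plan is to reduce (via a cyclic permutation) to the case where six variables lie at level $0$, designate level $0$, and consider how those six split between the two $\pi$-coefficient classes. If the seventh variable also sits at level $0$, Lemma \ref{max7} finishes; and if the split puts at least five variables in a single $\pi$-coefficient class, Lemma \ref{max5same} finishes. The remaining cases are a $3$--$3$ split and a $4$--$2$ split, and I would handle them separately.

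For the $3$--$3$ case, I would apply Lemma \ref{three} to three variables of one $\pi$-coefficient class to get
$$(^3_3,\ldots)\xrightarrow{t}(^1_3,\ldots,1*,\ldots),$$
and then again to the three variables of the other class, obtaining
$$(^1_3,\ldots,1*,\ldots)\xrightarrow{t}(^1_1,\ldots,2*,\ldots).$$
Both new variables are built from level-$0$ variables and so are primary, and Lemma \ref{donefrom4} concludes.

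For the $4$--$2$ case, I would first apply Lemma \ref{three} to three of the four same-$\pi$-coefficient variables:
$$(^4_2,\ldots)\xrightarrow{t}(^2_2,\ldots,1*,\ldots),$$
producing a primary at level $4$ and leaving a $2$--$2$ split at level $0$. Two \textit{d}-contractions then exhaust level $0$, yielding two primary level-$1$ variables; exploiting the $\pi$-coefficient control of the \textit{d}-contraction in Lemma \ref{two}, I would arrange both of them to lie in the same $\pi$-coefficient class:
$$(^2_2,\ldots,1*,\ldots)\xrightarrow{d}(^1_1,1*,\ldots,1*,\ldots)\xrightarrow{d}(0,2*,\ldots,1*,\ldots).$$
A final \textit{s3}-contraction on the matched level-$1$ pair produces a primary at level $4$ or $5$: level $5$ finishes via Hensel's Lemma, and level $4$ combines with the earlier level-$4$ primary to finish via Lemma \ref{donefrom4}. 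The hard step is this $4$--$2$ case; the crucial tool is the $\pi$-coefficient control in Lemma \ref{two}, which lets me align two level-$1$ primaries so that a single \textit{s3}-contraction pushes them up to a level where Lemma \ref{donefrom4} applies.
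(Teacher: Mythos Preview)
Your proof is correct. The $3$--$3$ case matches the paper exactly: two $t$-contractions followed by Lemma~\ref{donefrom4}.

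For the $4$--$2$ case, however, the paper takes a different and somewhat simpler route. Rather than using a $t$-contraction followed by two $d$-contractions and an $s3$-contraction, the paper performs three successive $s2$-contractions on the three same-$\pi$-coefficient pairs available in a $(^4_2)$ configuration (two pairs from the class of four, one pair from the class of two), obtaining three primary variables at level~$2$:
\[
(^4_2,0,0,0,0,0)\xrightarrow{s2}(^2_2,0,1*,0,0,0)\xrightarrow{s2}(^0_2,0,2*,0,0,0)\xrightarrow{s2}(^0_0,0,3*,0,0,0),
\]
and then invokes Lemma~\ref{donefrom2with3}. This avoids both the $t$-contraction and the need for $\pi$-coefficient control in the $d$-contractions. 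Your approach works but relies on more of the available machinery (in particular, Lemma~\ref{three} and the $\pi$-coefficient control clause of Lemma~\ref{two}), whereas the paper's argument for this subcase needs only the $s2$-contraction and pigeonhole.
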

\begin{proof}
Without loss of generality, assume that the six variables appear in level 0.  By lemma \ref{max5same}, assume there are at most four variables with the same $\pi$-coefficient in level 0.  First suppose there are four variables in one of the $\pi$-coefficient classes in level 0.  Then, after the series of contractions
$$(^{4}_{2},0,0,0,0,0) \xrightarrow{s2}
(^{2}_{2},0,1*,0,0,0) \xrightarrow{s2}
(^{0}_{2},0,2*,0,0,0) \xrightarrow{s2}
(^{0}_{0},0,3*,0,0,0),$$
the result follows from Lemma \ref{donefrom2with3}.  Next assume there are three variables in each $\pi$-coefficient class.  After the series of contractions 

$$(^{3}_{3},0,0,0,0,0) \xrightarrow{t}
(^{1}_{3},0,0,0,1*,0) \xrightarrow{t}
(^{1}_{1},0,0,0,2*,0),$$
the result follows from Lemma \ref{donefrom4}.
\end{proof}

\begin{lemma} \label{doneaftert}
Suppose that (\ref{eq}) has three variables with the same $\pi$-coefficient in level $k$ and at least one variable in level $k+4$.  Then (\ref{eq}) has a nontrivial solution.
\end{lemma}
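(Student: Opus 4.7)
The plan is to designate level $k$ (so every variable originally at level $k$ counts as primary), apply a $t$-contraction to two of the three variables that share a $\pi$-coefficient at level $k$, and then invoke Lemma \ref{donefrom4}.

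More concretely, the three variables with the same $\pi$-coefficient at level $k$ satisfy the hypothesis of Lemma \ref{three}, so we may perform a $t$-contraction on two of them to produce a new variable at level at least $k+4$. Since the two contracted variables were primary (they lived in the designated level), the new variable is primary as well. Following the convention stated just before Section 3, if this $t$-contraction yields a variable strictly above level $k+4$, then it already sits at level $\ge k+5$ and Hensel's Lemma finishes the proof immediately; so we may assume the new primary variable sits at exactly level $k+4$.

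We now have the hypothesized variable at level $k+4$ together with the newly produced primary variable at level $k+4$, giving at least two variables at level $k+4$ with at least one primary. Lemma \ref{donefrom4} then yields a nontrivial solution.

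There is no real obstacle here; the only point worth being careful about is confirming that the variable produced by the $t$-contraction is indeed primary, which follows directly from the definition of a primary variable (any variable formed from a contraction involving a primary variable is primary) together with our choice of designated level.
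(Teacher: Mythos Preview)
Your proof is correct and follows exactly the same approach as the paper: perform a $t$-contraction among the three variables to produce a primary variable at level $k+4$, then apply Lemma~\ref{donefrom4}. The paper's version is simply more terse, omitting the explicit discussion of designation and primariness that you spell out.
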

\begin{proof}
Perform a \textit{t}-contraction among the three variables.  The solution follows from Lemma \ref{donefrom4}.
\end{proof}

\begin{lemma} \label{doneafterd}
Suppose that (\ref{eq}) has two variables with different $\pi$-coefficient in level $k$ and at least one variable in each of levels $k+1$ and $k+2$.  Then (\ref{eq}) has a nontrivial solution.
\end{lemma}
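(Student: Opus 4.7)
The strategy is a three-step cascade: two consecutive $d$-contractions route a primary variable up to level $k+2$ with a $\pi$-coefficient matching that of the existing level-$(k+2)$ variable, and a single $s3$-contraction on the resulting matched pair then lifts a primary variable to level at least $(k+2)+3 = k+5$, at which point Hensel's lemma finishes the argument.

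Carrying this out, I would designate level $k$ so that both level-$k$ variables are primary. The first $d$-contraction acts on the level-$k$ pair; by Lemma \ref{two} the $\pi$-coefficient of the resulting primary variable at level $k+1$ may be chosen freely, and I would set it to differ from that of the existing level-$(k+1)$ variable. The second $d$-contraction acts on the new pair of distinct-$\pi$-coefficient variables at level $k+1$; again invoking Lemma \ref{two}, I would select the $\pi$-coefficient of the resulting primary at level $k+2$ to match the existing level-$(k+2)$ variable. Finally, the $s3$-contraction at level $k+2$ produces a primary variable at level at least $k+5$. Schematically,
$$(^{1}_{1}, 1, 1, 0, 0, 0) \xrightarrow{d} (0, ^{1*}_{1}, 1, 0, 0, 0) \xrightarrow{d} (0, 0, ^{2*}_{0}, 0, 0, 0) \xrightarrow{s3} \ldots,$$
where the last step lands a primary variable at level at least $k+5$.

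The only point of care is arranging the final $s3$-contraction so that a same-$\pi$-coefficient pair is present at level $k+2$, and this is handled entirely by the two $\pi$-coefficient choices in the $d$-contractions (different the first time, matching the second time). I do not foresee any significant obstacle; case analysis on the $s3$ outcome is unnecessary, since the minimum landing level $(k+2)+3 = k+5$ already suffices for Hensel's lemma.
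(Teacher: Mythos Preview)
Your proof is correct and matches the paper's approach exactly: two $d$-contractions (choosing $\pi$-coefficients first to differ, then to match) followed by an $s3$-contraction at level $k+2$. The only cosmetic difference is that the paper packages your final $s3$ step as an appeal to Lemma~\ref{donefrom2}, whereas you unpack that lemma inline.
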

\begin{proof}

After the series of contractions: $$(^{1}_{1},1,1,0,0,0) \xrightarrow{d}
(0,{}^{1*}_{1},1,0,0,0) \xrightarrow{d}
(0,0,{}^{2*}_{0},0,0,0),$$
the solution follows from Lemma \ref{donefrom2}.
\end{proof}

\begin{lemma} \label{doneafterds2}
Suppose that (\ref{eq}) has at least two variables with the same $\pi$-coefficient in level $k$ and at least two variables with differing $\pi$-coefficients in level $k+1$.  Then (\ref{eq}) has a nontrivial solution.
\end{lemma}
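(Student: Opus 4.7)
My plan is to designate level $k$ as primary and, via two contractions, produce two variables at level $k+2$ that share a $\pi$-coefficient and include a primary one, so that Lemma \ref{donefrom2} closes the argument.

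The first step is an $s2$-contraction on the two primary variables at level $k$ that share a $\pi$-coefficient; by Lemma \ref{two}, this yields a primary variable at level $k+2$ whose $\pi$-coefficient $\epsilon$ is outside our control. The second step is a $d$-contraction on the two variables at level $k+1$ with differing $\pi$-coefficients; by Lemma \ref{two}, this produces a variable at level $k+2$ whose $\pi$-coefficient we may prescribe, so I would set it equal to $\epsilon$. Taking $k = 0$ without loss of generality, the sequence is
$$(^{2}_{0},{}^{1}_{1},0,0,0,0) \xrightarrow{s2} (0,{}^{1}_{1},1*,0,0,0) \xrightarrow{d} (0,0,{}^{2*}_{0},0,0,0),$$
after which Lemma \ref{donefrom2} applies directly to the two variables at level $2$ (one primary).

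The argument is short and I do not expect a real obstacle. The only subtlety worth flagging is the asymmetry in controllability between the two contractions: the $s2$-step's output $\pi$-coefficient is uncontrollable, while the $d$-step's is freely chosen, and the whole plan relies on exploiting the latter freedom to match whatever (unknown) value of $\epsilon$ the $s2$-step happens to produce. The order of the two steps is therefore essential — performing the $d$-contraction first and then trying to match with the $s2$-contraction would not work.
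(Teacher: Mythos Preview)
Your proof is correct and matches the paper's own proof essentially verbatim: the same $s2$-then-$d$ contraction sequence, the same appeal to Lemma~\ref{donefrom2}, and the same use of the controllable $\pi$-coefficient in the $d$-contraction to match the uncontrollable output of the $s2$-contraction. Your added remark about the order of the two contractions is a nice clarification that the paper leaves implicit.
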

\begin{proof}
After the series of contractions:
$$(^{2}_{0},{}^{1}_{1},0,0,0,0) \xrightarrow{s2}
(0,{}^{1}_{1},1*,0,0,0) \xrightarrow{d}
(0,0,{}^{2*}_{0},0,0,0),$$
the solution follows from Lemma \ref{donefrom2}.
\end{proof}

\begin{lemma} \label{slide}
Suppose that (\ref{eq}) has two variables in level $k$, and at least one variable in levels $k+1$, $k+2$, ... $k+t-1$, then contractions can be performed to produce a variable at level at least $k+t$.
\end{lemma}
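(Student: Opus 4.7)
The plan is to prove the statement by strong induction on $t$. The base case $t = 1$ is immediate: Lemma \ref{two} guarantees that any contraction of the two variables at level $k$ (whether of type $d$, $s2$, or $s3$) produces a new variable at level at least $k+1$, which is exactly the required conclusion.

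For the inductive step, I would first apply a contraction to the two variables at level $k$, producing a single new variable at some level $\ell \geq k+1$. If $\ell \geq k+t$, the conclusion is already attained. Otherwise $\ell$ lies in the range $[k+1,\ k+t-1]$, and by hypothesis an as-yet-unused variable sits at level $\ell$. Combined with the contraction's output, this yields two variables at level $\ell$, alongside the still-unused variables at each of the levels $\ell+1, \ldots, k+t-1$. Applying the inductive hypothesis to this new configuration, with base level $\ell$ and parameter $t' = k+t-\ell \leq t-1$, then produces a variable at level at least $\ell + t' = k+t$, as desired.

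I do not anticipate a substantive obstacle here; the argument is essentially bookkeeping. The only subtlety worth flagging is that after the initial contraction the output may land anywhere in $[k+1,\infty)$ rather than at exactly level $k+1$, so in particular it can skip over several of the given intermediate-level variables. This causes no trouble: the contraction output together with the leftover intermediate-level variables still forms an instance of the lemma at the smaller parameter $t'$, so the induction closes cleanly. Note also that the $t$-contraction of Lemma \ref{three} is not available at the first step, since only two variables exist at level $k$, but the three contraction types of Lemma \ref{two} are all that the argument requires.
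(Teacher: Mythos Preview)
Your proof is correct and takes essentially the same approach as the paper: the paper's proof is the one-liner ``any two variables in the same level can be contracted to a variable at least one level higher; by repeated contractions, we obtain the desired variable,'' and your induction on $t$ is simply a careful formalization of that repetition, including the harmless case where a contraction jumps past several intermediate levels.
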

\begin{proof}
Any two variables in the same level can be contracted to a variable at least one level higher.  By repeated contractions, we obtain the desired variable.
\end{proof}

\begin{lemma} \label{doneafters2}
Suppose that (\ref{eq}) has two variables with the same $\pi$-coefficient in level $k$ and at least one variable in each of levels $k+2$ and $k+3$.  Then (\ref{eq}) has a nontrivial solution.
\end{lemma}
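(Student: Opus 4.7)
The plan is to reduce the given configuration to a situation where Lemma \ref{donefrom2} applies, by first performing the given $s2$-contraction and then, if necessary, a $d$-contraction whose output $\pi$-coefficient is freely chosen to match an existing variable. Without loss of generality designate level $k=0$, so the hypotheses give the configuration $({}^{2}_{0},0,1,1,0,0)$. I would first contract the two level-$0$ variables sharing a $\pi$-coefficient via an $s2$-contraction to produce a primary variable at level $2$:
$$({}^{2}_{0},0,1,1,0,0)\xrightarrow{s2}(0,0,2*,1,0,0).$$
The form now contains two variables at level $2$ (one of them primary) together with one variable at level $3$, and the next step branches on the $\pi$-coefficients of the two level-$2$ variables.

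If those two variables share a $\pi$-coefficient, then Lemma \ref{donefrom2} finishes immediately via an $s3$-contraction to level at least $5$. Otherwise they differ, and I would perform a $d$-contraction on them, invoking Lemma \ref{two} to prescribe the $\pi$-coefficient of the resulting primary variable so that it matches that of the existing level-$3$ variable:
$$(0,0,{}^{1*}_{1},1,0,0)\xrightarrow{d}(0,0,0,{}^{2*}_{0},0,0).$$
Now two variables at level $3$ share a common $\pi$-coefficient with at least one of them primary, and Lemma \ref{donefrom2} again applies.

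The one delicate point in the argument is that the $\pi$-coefficient of the primary variable produced by the initial $s2$-contraction is not under our control, which is precisely what forces the case split above. What rescues the argument is the guarantee in Lemma \ref{two} that a $d$-contraction does admit a choice of output $\pi$-coefficient, letting us align the two variables at level $3$ in the harder subcase. No configuration involving the unused higher levels $k+4,k+5$ is ever needed, so no further case analysis is required.
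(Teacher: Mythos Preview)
Your proof is correct and follows essentially the same approach as the paper's own proof: an $s2$-contraction to level $k+2$, followed (in the nontrivial case of differing $\pi$-coefficients) by a $d$-contraction with chosen output $\pi$-coefficient to match the existing level-$(k+3)$ variable, finishing with Lemma~\ref{donefrom2}. The paper merely compresses your explicit case split into the phrase ``by Lemma~\ref{donefrom2}, assume the resulting variable and the existing variable in level $k+2$ have different $\pi$-coefficients.''
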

\begin{proof}
Perform an \textit{s2}-contraction on the two variables.  By Lemma \ref{donefrom2}, assume the resulting variable and the existing variable in level $k+2$ have different $\pi$-coefficients.  Perform a \textit{d}-contraction to create a variable in level $k+3$ with the same $\pi$-coefficient as the existing variable.  The solution follows from Lemma \ref{donefrom2}.
\end{proof}

\begin{lemma} \label{doneafters2_2}
Suppose that (\ref{eq}) has two variables with the same $\pi$-coefficient in level $k$ and at least two variables in level $k+2$ not having the same $\pi$-coefficient.  Then (\ref{eq}) has a nontrivial solution.
\end{lemma}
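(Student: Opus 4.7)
The plan is to perform the obvious $s2$-contraction on the two level-$k$ variables sharing a $\pi$-coefficient, producing a new primary variable exactly at level $k+2$, and then to argue that this variable is automatically set up to be fed into Lemma \ref{donefrom2}.

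After designating level $k=0$ (without loss of generality), the starting configuration can be written as $(^2_0, 0, {}^1_1, 0, 0, 0)$. The $s2$-contraction on the two matched level-$0$ variables yields a primary variable at level $2$, so the diagram becomes
$$(^2_0, 0, {}^1_1, 0, 0, 0) \xrightarrow{s2} (0, 0, {}^{2*}_{1}, 0, 0, 0) \quad \text{or} \quad (0, 0, {}^1_{2*}, 0, 0, 0),$$
depending on which $\pi$-coefficient class the contracted variable lands in.

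The key observation — and really the only thing to check — is that we genuinely cannot fail here: since $\pi$-coefficients lie in $\{0,1\}$, there are only two $\pi$-coefficient classes in level $k+2$, and the two existing variables already occupy one each. Therefore the newly produced primary variable must coincide in $\pi$-coefficient with one of the two existing level-$k+2$ variables, giving two variables in the same level with the same $\pi$-coefficient, at least one of which is primary. Applying Lemma \ref{donefrom2} to this pair (performing an $s3$-contraction to push a primary variable up to level at least $k+5$) finishes the proof via Hensel's Lemma.

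There is no real obstacle here; the lemma is essentially a bookkeeping corollary of Lemma \ref{donefrom2} combined with the pigeonhole principle on the two $\pi$-coefficient classes. The only subtlety to keep in mind is that the $s2$-contraction produces a variable at \emph{exactly} two levels higher (as noted in the paper's conventions, if it were $5$ or more higher Hensel would immediately apply), so the resulting variable does land in level $k+2$ where we want it.
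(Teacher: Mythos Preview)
Your proof is correct and follows exactly the same approach as the paper: perform the $s2$-contraction on the level-$k$ pair and invoke Lemma~\ref{donefrom2}. Your version simply spells out the pigeonhole step (the new level-$(k+2)$ variable must match one of the two existing $\pi$-coefficient classes) that the paper leaves implicit.
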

\begin{proof}
Perform an \textit{s2}-contraction on the two variables in level $k$.  The solution following from Lemma \ref{donefrom2}.
\end{proof}

\begin{lemma} \label{doneafters3_2}
Suppose that (\ref{eq}) has two variables with the same $\pi$-coefficient in level $k$ and at least two variables in level $k+3$ not having the same $\pi$-coefficient.  Then (\ref{eq}) has a nontrivial solution.
\end{lemma}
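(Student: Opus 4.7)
The plan is to perform an $s3$-contraction on the two variables with the same $\pi$-coefficient at level $k$, having designated $k$ so that the resulting variable is primary. By Lemma \ref{two} together with the paper's standing convention (an $s3$-contraction producing a variable five or more levels up would finish by Hensel's Lemma directly), the new primary variable lands either exactly at level $k+3$ or exactly at level $k+4$, and I would split on these two cases.

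In the first case, the primary variable sits at level $k+3$. The two existing variables at level $k+3$ already occupy both $\pi$-coefficient classes, so the new primary variable must share its $\pi$-coefficient with at least one of them. This gives two variables at level $k+3$ with the same $\pi$-coefficient, at least one of which is primary, and the conclusion follows immediately from Lemma \ref{donefrom2}.

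In the second case, the primary variable sits at level $k+4$. Since the two existing variables at level $k+3$ have different $\pi$-coefficients, a single $d$-contraction on them produces a (non-primary) variable at level $k+4$. We then have two variables at level $k+4$, one of which is primary, and Lemma \ref{donefrom4} concludes.

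The only real decision point is the outcome of the initial $s3$-contraction, and I do not expect either branch to present a genuine obstacle: the first branch is resolved by the observation that only two $\pi$-coefficient classes are available, and the second by a single $d$-contraction to bridge the one-level gap between $k+3$ and $k+4$.
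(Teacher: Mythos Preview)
Your proof is correct and follows essentially the same route as the paper: an $s3$-contraction on the pair at level $k$, then in the $k+3$ case invoke Lemma~\ref{donefrom2} (the paper states this without spelling out the pigeonhole observation you made), and in the $k+4$ case perform a $d$-contraction on the pair at level $k+3$ and finish with a contraction at level $k+4$ (the paper phrases this last step as ``any contraction'' rather than citing Lemma~\ref{donefrom4}, but it is the same argument).
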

\begin{proof}
Perform an \textit{s3}-contraction on the two variables in level $k$.  If the new variable goes to level $k+3$, a solution follows from Lemma \ref{donefrom2}.  If the new variable goes to level $k+4$, perform a \textit{d}-contraction on the variables in level $k+3$ to produce a variable in level $k+4$.  A solution follows from any contraction on the two variables in level $k+4$.
\end{proof}

\begin{lemma} \label{doneafters3}
Suppose that (\ref{eq}) has two variables with the same $\pi$-coefficient in level $k$ and at least one variable in each of levels $k+3$ and $k+4$.  Then (\ref{eq}) has a nontrivial solution.
\end{lemma}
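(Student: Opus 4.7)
The plan is to designate level $k$ (so the two variables at level $k$ are primary) and perform an $s3$-contraction on the two level-$k$ variables, producing a primary variable at level $k+3$ or $k+4$. The proof then splits into two cases according to where this new primary variable lands, and the harder case splits further by the $\pi$-coefficient of the resulting variable.

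If the $s3$-contraction produces a primary variable at level $k+4$, then together with the existing variable at level $k+4$ we have two variables in level $k+4$, one of which is primary, and Lemma \ref{donefrom4} finishes the proof.

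If instead the $s3$-contraction produces a primary variable at level $k+3$, then this new variable together with the existing variable at level $k+3$ gives two variables at level $k+3$, at least one of which is primary. If the two variables happen to share a $\pi$-coefficient, Lemma \ref{donefrom2} (with $k+3 \geq k+2$) finishes immediately. If they have different $\pi$-coefficients, a $d$-contraction combines them into a primary variable at level $k+4$, which together with the existing variable at level $k+4$ puts two variables (one primary) at level $k+4$, so Lemma \ref{donefrom4} again finishes.

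There is no real obstacle here; the main task is simply bookkeeping the case split cleanly. The only subtle point is remembering that under an $s3$-contraction we cannot control the exact level of the resulting variable, which is exactly why the two cases ``lands at $k+3$'' and ``lands at $k+4$'' must both be handled. The existing level-$(k+4)$ variable is crucial precisely for the second case and for the different-$\pi$-coefficient subcase of the first case, where it absorbs the primary variable that was lifted by the $d$-contraction.
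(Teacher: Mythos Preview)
Your proof is correct and follows essentially the same approach as the paper: perform an $s3$-contraction on the level-$k$ pair, dispose of the level-$(k+4)$ outcome via Lemma~\ref{donefrom4}, and in the level-$(k+3)$ outcome push the primary variable up through the existing variables at $k+3$ and $k+4$. The only cosmetic difference is that the paper invokes Lemma~\ref{slide} (with $t=2$, starting at $k+3$) to handle the level-$(k+3)$ case in one stroke, whereas you unpack that step into the explicit same/different $\pi$-coefficient subcases and appeal to Lemmas~\ref{donefrom2} and~\ref{donefrom4} respectively.
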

\begin{proof}
Perform an \textit{s3}-contraction on the two variables.
By Hensel's Lemma assume that the new variable is created at level $k+3$ or $k+4$.
By Lemma \ref{donefrom4}, assume it is level $k+3$.
The solution follows from Lemma \ref{slide} with $t=2$ and $k$ replaced by $k+3$.
\end{proof}

\begin{lemma} \label{empty234}
Suppose that (\ref{eq}) has at least four variables in level $k$ which can be used to form two pairs each with the same $\pi$-coefficient, and at least one variable in one of levels $k+2$, $k+3$, or $k+4$.  Then (\ref{eq}) has a nontrivial solution.
\end{lemma}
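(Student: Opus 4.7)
The plan is to designate level $k$ and split into three cases according to whether the additional variable lies at level $k+2$, $k+3$, or $k+4$. In each case I would perform one or two contractions on the given pairs at level $k$ and then reduce to an earlier lemma.

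Suppose the additional variable lies at level $k+4$. I would perform an \textit{s3}-contraction on one of the pairs; by the standing convention that any contraction producing a variable at level $\geq k+5$ immediately finishes via Hensel's Lemma, the new primary variable lands at exactly level $k+3$ or $k+4$. If it lands at $k+4$, it joins the pre-existing variable at that level and Lemma \ref{donefrom4} applies. If it lands at $k+3$, the remaining same-$\pi$-coefficient pair at level $k$ together with variables at levels $k+3$ and $k+4$ fulfills the hypothesis of Lemma \ref{doneafters3}.

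Suppose the additional variable lies at level $k+3$. I would perform an \textit{s2}-contraction on just one of the pairs, producing a primary variable exactly at level $k+2$. Combined with the remaining same-coefficient pair at level $k$ and the pre-existing variable at level $k+3$, Lemma \ref{doneafters2} applies directly.

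The case where the additional variable $w$ lies at level $k+2$ is the one I expect to be the main obstacle, because the $\pi$-coefficient produced by an \textit{s2}-contraction cannot be controlled, so one cannot simply engineer a same-coefficient pair with $w$. My plan here is to perform \textit{s2}-contractions on \emph{both} pairs at level $k$, producing two new primary variables at level $k+2$; together with $w$, this places three variables at level $k+2$, two of which are primary. By the pigeonhole principle two of these three must share a $\pi$-coefficient, and since at most one of them (namely $w$) is non-primary, at least one variable of this matching pair is primary. Lemma \ref{donefrom2} then produces a nontrivial solution and completes the proof.
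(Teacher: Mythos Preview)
Your proof is correct. In the $k+2$ case you argue exactly as the paper does: perform both \textit{s2}-contractions to place three variables (two primary) at level $k+2$, then apply pigeonhole and Lemma~\ref{donefrom2}.

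In the $k+3$ and $k+4$ cases you take a slightly different route from the paper. The paper expends both same-coefficient pairs at once via two \textit{s3}-contractions and then analyzes the possible landing levels using only the elementary Lemmas~\ref{donefrom4}, \ref{donefrom2}, and \ref{slide}. You instead spend only one pair (via \textit{s2} for the $k+3$ case, \textit{s3} for the $k+4$ case) and keep the second pair in reserve so that the situation matches the hypotheses of the already-proved Lemmas~\ref{doneafters2} and \ref{doneafters3}, respectively. Your approach is a bit more economical, since those two lemmas already package exactly the endgame the paper replays here; the paper's version has the minor advantage of relying only on the most primitive building blocks. Both arguments are equally valid.
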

\begin{proof}
If level $k+2$ contains at least one variable, perform \textit{s2}-contractions on the two pairs.  The solution follows from Lemma \ref{donefrom2}.

If level $k+3$ contains at least one variable, perform \textit{s3}-contractions on the two pairs.  By Lemma \ref{donefrom4}, assume that both of the resulting variables don't go to level $k+4$, and by Lemma \ref{donefrom2} and the pigeonhole principle, assume that both variables don't go to level $k+3$.  Then the solution follows from Lemma \ref{slide} and Hensel's Lemma.

If level $k+4$ contains at least one variable, perform \textit{s3}-contractions on the two pairs.  By Lemma \ref{slide} and Hensel's Lemma, assume both variables don't go to level $k+3$, and thus at least one variable goes to level $k+4$.  The solution follows from Lemma \ref{donefrom4}.
\end{proof}

\begin{lemma} \label{oneofempty14}
Suppose that (\ref{eq}) has two variables in level $k$ with different $\pi$-coefficients, and has at least one variable in both of levels $k+1$ and $k+4$.  Then (\ref{eq}) has a nontrivial solution.
\end{lemma}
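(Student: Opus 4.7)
The plan is to designate level $k$ and work with the (possibly partial) configuration $({}^{1}_{1},1,0,0,1,0)$, regarding the two level-$k$ variables as primary. The goal is to raise a primary variable to level $k+5$ in two moves: a $d$-contraction on level $k$, followed by an $s3$-contraction on level $k+1$.

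First I would perform a $d$-contraction on the two level-$k$ variables (which by hypothesis have different $\pi$-coefficients). By the $\pi$-coefficient control clause of Lemma \ref{two}, I may place the resulting primary variable at level $k+1$ in the same $\pi$-coefficient class as the pre-existing variable there. The configuration then contains a pair of variables at level $k+1$ sharing a $\pi$-coefficient, at least one of which is primary, while the variable at level $k+4$ is untouched.

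Next I would perform an $s3$-contraction on this matched pair at level $k+1$. The output is primary (since the contraction involves a primary variable) and lies at some level $\geq (k+1)+3 = k+4$. If it lands at level $k+5$ or higher, Hensel's Lemma immediately yields a nontrivial zero. Otherwise it lands at exactly $k+4$, where it joins the pre-existing variable at $k+4$, and Lemma \ref{donefrom4} concludes the proof.

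The only point requiring care is the appeal to the $\pi$-coefficient-choice clause of Lemma \ref{two} in the first step: without it, we could not force the two level-$(k+1)$ variables to share a $\pi$-coefficient, and the subsequent $s3$-contraction would be unavailable. Beyond this piece of bookkeeping I anticipate no real obstacle, since the variable at $k+4$ is precisely what is needed to absorb the ambiguity in where the $s3$-contraction lands.
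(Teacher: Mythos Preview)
Your proposal is correct and follows essentially the same route as the paper's proof: a $d$-contraction on level $k$ (using the $\pi$-coefficient control of Lemma~\ref{two}) followed by an $s3$-contraction on level $k+1$, with the level-$(k+4)$ variable absorbing the landing ambiguity via Lemma~\ref{donefrom4}. Your explicit mention of the $\pi$-coefficient-choice clause is a point the paper leaves implicit in its notation but uses in exactly the same way.
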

\begin{proof}
Without loss of generality, assume $k=0$.  Consider the series of contractions:

$$({}^{1}_{1},1,0,0,1,0) \xrightarrow{d}
(0,2*,0,0,1,0) \xrightarrow{s3}
(0,0,0,0,2*,0).$$
Here, because the s3-contraction involves a primary variable that has already been raised one level, if it creates a variable at least four levels up, then a solution follows from Hensel's Lemma.  Hence, we assume that it produces a variables exactly three levels up.  Perform a contraction with the other variable in level 4, and a solution follows from Hensel's Lemma.
\end{proof}

\begin{lemma} \label{empty14}
Suppose that (\ref{eq}) has at least four variables in level $k$ which can be used to form two pairs, one with the same $\pi$-coefficient and one with different $\pi$-coefficients, and has at least one variable in one of levels $k+1$ or $k+4$.  Then (\ref{eq}) has a nontrivial solution.
\end{lemma}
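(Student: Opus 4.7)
The plan is to treat the two cases ``level $k+1$ non-empty'' and ``level $k+4$ non-empty'' separately, reducing each to a previously established lemma. By cyclic permutation I may assume $k=0$.

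I would first observe that any four variables in level $0$ that split into a same-pair and a different-pair must have a $3$--$1$ distribution across the two $\pi$-coefficient classes: the same-pair fills one class with two variables, and the different-pair adds one more to that same class and one to the other. In particular, level $0$ contains at least three variables sharing a common $\pi$-coefficient.

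If level $4$ contains a variable, then these three common-coefficient variables together with the level-$4$ variable satisfy the hypothesis of Lemma \ref{doneaftert}, and a nontrivial solution follows immediately. If instead level $1$ contains a variable, I would perform a $d$-contraction on the different-pair at level $0$, using the control over the resulting $\pi$-coefficient granted by Lemma \ref{two} to arrange that the new level-$1$ variable has a $\pi$-coefficient opposite to that of the pre-existing level-$1$ variable. The resulting configuration still has the two same-pair variables at level $0$ (sharing a $\pi$-coefficient) together with two variables at level $1$ having different $\pi$-coefficients; Lemma \ref{doneafterds2} then delivers a nontrivial solution.

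The only real obstacle is the preliminary $3$--$1$ observation; once that is in hand, each branch is a one-step reduction to a prior lemma, with the level-$1$ branch additionally requiring the single $d$-contraction needed to line the configuration up with the hypothesis of Lemma \ref{doneafterds2}.
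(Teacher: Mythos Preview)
Your argument is correct. The level-$4$ branch is identical to the paper's. In the level-$1$ branch the paper proceeds slightly differently: rather than $d$-contracting the different-pair and invoking Lemma~\ref{doneafterds2}, it performs an $s2$-contraction on the same-pair to produce a variable at level~$2$, leaving $({}^{1}_{1},1,1,0,0,0)$, and then applies Lemma~\ref{doneafterd}. Both routes are one contraction followed by one prior lemma; yours exploits the control over the $\pi$-coefficient in a $d$-contraction, while the paper's avoids needing that control at the cost of tracking the leftover $({}^{1}_{1})$ in level~$0$. Your preliminary $3$--$1$ observation is exactly what the paper's notation $({}^{3}_{1},\ldots)$ encodes, so the two proofs are minor variants of the same idea.
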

\begin{proof}
Suppose that level $k+1$ contains at least one variable.  After the contraction:

$$({}^{3}_{1},1,0,0,0,0) \xrightarrow{s2}
({}^{1}_{1},1,1,0,0,0)$$
the solution follows from Lemma \ref{doneafterd}.

Suppose that level $k+4$ contains at least one variable.  The solution follows from Lemma \ref{doneaftert}.
\end{proof}

\begin{lemma} \label{max5}
Suppose that (\ref{eq}) has at least five variables at the same level.  Then (\ref{eq}) has a nontrivial solution.
\end{lemma}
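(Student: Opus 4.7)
\noindent\emph{Proof sketch.} My plan is to reduce to the case of exactly five variables at level $0$ with a restricted $\pi$-coefficient distribution, and then to split into cases based on the locations of the remaining two variables.

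After a cyclic permutation and an application of Lemma \ref{max6}, I may assume that level $0$ contains exactly five variables and that no other level contains six or more. By Lemma \ref{max5same}, at most four of these five share a $\pi$-coefficient, so their distribution is either $({}^{4}_{1})$ or $({}^{3}_{2})$. The remaining two variables are placed somewhere in levels $1$ through $5$.

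In both distributions, I can pick four of the five level-$0$ variables that form two same-$\pi$ pairs, and I can also pick four that form one same-$\pi$ pair together with one different-$\pi$ pair. Consequently, as soon as at least one of the remaining two variables lies in level $1$, $2$, $3$, or $4$, either Lemma \ref{empty234} (when that variable is at level $2$, $3$, or $4$) or Lemma \ref{empty14} (when it is at level $1$) applies directly and produces a nontrivial solution.

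The residual and most delicate case, which I expect to be the main obstacle, is when both remaining variables sit in level $5$, so the form has shape $({}^{a}_{b}, 0, 0, 0, 0, 2)$ with $(a,b)\in\{(4,1),(3,2)\}$; none of the earlier helper lemmas immediately sees these isolated level-$5$ variables. To handle it I would apply a cyclic permutation to bring the stranded level-$5$ variables down, obtaining $(2, {}^{a}_{b}, 0, 0, 0, 0)$, and designate the new level $0$. If the two new level-$0$ variables have different $\pi$-coefficients, a $d$-contraction produces a primary variable at level $1$, yielding six variables at that level and closing the case by Lemma \ref{max6}. Otherwise, they share a $\pi$-coefficient; since both classes are populated in the $({}^{a}_{b})$ distribution at level $1$, I can select two level-$1$ variables with distinct $\pi$-coefficients, and Lemma \ref{doneafterds2} finishes the proof.
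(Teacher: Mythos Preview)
Your proof is correct and follows the paper's approach almost verbatim through the reduction to exactly five variables at level $0$, the $\pi$-coefficient census via Lemma~\ref{max5same}, and the elimination of levels $1$--$4$ through Lemmas~\ref{empty234} and~\ref{empty14}.

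The only genuine divergence is in the residual $({}^{a}_{b},0,0,0,0,2)$ case. The paper contracts the two level-$5$ variables directly, observes that the resulting variable lands in some level $6$--$9$, reduces modulo $6$ to place it in level $0$, $1$, $2$, or $3$, and then recycles the cases already handled (or Lemma~\ref{max6} if it lands in level $0$). You instead cyclically shift to $(2,{}^{a}_{b},0,0,0,0)$ and branch on the $\pi$-coefficients of the new level-$0$ pair, invoking Lemma~\ref{max6} after a $d$-contraction in the different-$\pi$ case and Lemma~\ref{doneafterds2} in the same-$\pi$ case. Both arguments are short and valid; the paper's version has the virtue of reusing the lemma's own earlier cases rather than reaching for an additional helper lemma, while yours avoids the somewhat implicit ``drop by six levels'' change of variables and makes the two sub-cases fully explicit.
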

\begin{proof}
Without loss of generality, assume the five variables appear in level 0.  By lemma \ref{max6}, assume level 0 contains exactly five variables.  We will need to consider the locations of the remaining two variables.

By lemma \ref{max5same}, there are at most four variables with the same $\pi$-coefficient in level 0.  By the pigeonhole principle, one may construct two pairs of variables, one with the same $\pi$-coefficient and one with different $\pi$-coefficients.  Thus by Lemma \ref{empty14}, assume there are no variables in levels 1 or 4.

By the pigeonhole principle, one may also construct two pairs of variables, each with the same $\pi$-coefficient.  Thus by Lemma \ref{empty234}, assume that there are no variables in levels 2, 3, or 4.

Therefore, assume the remaining two variables are in level 5.  They can be contracted to a variable at least 1 level higher.  By Hensel's Lemma, assume this resulting variable is not at least 5 levels higher, and so is formed in level 6, 7, 8, or 9.  By a change of variables, the variable can be moved down six levels to level 0, 1, 2, or 3.  The resulting form is covered by one of the preceding cases in this Lemma or by Lemma \ref{max6}, and a solution follows.
\end{proof}

Note that the change of variables used in the proof of the preceding lemma allows a variable in level $k$ to be regarded as a variable in any level $\ell \equiv k \pmod{6}$.  For the remainder of the article, we will exploit this fact implicitly, referring only to the lemma which determines the chosen level $\ell$.

\begin{lemma}\label{3and2same}
Suppose that (\ref{eq}) has at least three variables in level $k$ and at least two variables in level $k+5$.  Suppose further that it is not the case that all of the variables in level $k$ have the same $\pi$-coefficient and all the variables in level $k+5$ have the same $\pi$-coefficient. Then (\ref{eq}) has a nontrivial solution.
\end{lemma}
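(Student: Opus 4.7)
The plan is to apply one cyclic permutation so that the two or more variables originally at level $k+5$ become designated primaries at cycled level $0$, while the three or more variables originally at level $k$ appear as non-primaries at cycled level $1$. Under this relabeling, the mix hypothesis states that at least one of the two collections (primaries at level $0$ or non-primaries at level $1$) is not monochromatic in $\pi$-coefficient. I then case-split on the $\pi$-coefficient distribution of the primaries at cycled level $0$.

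In the first branch, suppose the primaries at level $0$ have differing $\pi$-coefficients. If the non-primaries at level $1$ are also mixed, I would $d$-contract a pair of differing-coefficient non-primaries at level $1$ to produce a non-primary at level $2$, leaving one non-primary at level $1$; the resulting configuration of two differing-coefficient primaries at level $0$, one variable at level $1$, and one at level $2$ invokes Lemma \ref{doneafterd} to conclude. If instead the non-primaries at level $1$ are all of a single $\pi$-coefficient $a$, I would $d$-contract the two primaries at level $0$ to a primary $P$ at level $1$ whose coefficient I choose to be $a$; level $1$ then contains at least four same-coefficient variables including $P$, and applying Lemma \ref{three} to a pair that includes $P$ yields a primary at level at least $5$, finishing by Hensel's Lemma.

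In the second branch, when the primaries at level $0$ share a $\pi$-coefficient, the mix hypothesis forces the non-primaries at level $1$ to be mixed. I would $s2$-contract the two primaries at level $0$ to produce a primary $P$ at level $2$ of some $\pi$-coefficient $\gamma$; then $d$-contract two differing-coefficient non-primaries at level $1$ to produce a non-primary at level $2$ whose coefficient I choose to be $\gamma$; and finally $s3$-contract the resulting same-coefficient pair at level $2$ to obtain a primary at level at least $5$, concluding by Hensel's Lemma.

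The hard part will be justifying that Lemma \ref{three} permits selecting the pair to $t$-contract so as to include the primary $P$ in the monochromatic subcase of the first branch. The statement of Lemma \ref{three} asserts only the existence of such a pair among the same-coefficient variables, but this selection flexibility is already exploited implicitly in the proofs of Lemmas \ref{max5same} and \ref{max6}, where $t$-contractions are repeatedly applied on chosen pairs of same-coefficient (primary) variables, and I would adopt the same convention here. If this flexibility were not available, one would instead fall back on two successive $s3$-contractions at level $1$ (one on two non-primaries, one on $P$ with a remaining non-primary) together with an application of Lemma \ref{donefrom4} when both resulting variables land at level $4$, handling the residual sub-case with further bookkeeping on the remaining variables in the form.
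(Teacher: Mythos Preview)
Your Branch~2 and the mixed--mixed subcase of Branch~1 are correct and essentially match the paper's handling of those cases. The gap is in the monochromatic subcase of Branch~1, where the primaries at cycled level~$0$ differ but the three or more non-primaries at cycled level~$1$ all share a $\pi$-coefficient.

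Your main argument there relies on applying Lemma~\ref{three} to a pair that \emph{includes} the primary $P$, but Lemma~\ref{three} only asserts the existence of \emph{some} pair among three same-coefficient variables that $t$-contracts to level at least $k+4$; it does not let you pick which variable participates. Your appeal to Lemmas~\ref{max5same} and~\ref{max6} as precedent is mistaken: in both of those lemmas every variable at the relevant level is primary (they all sit at the designated level), so whichever pair Lemma~\ref{three} selects, the result is automatically primary. No selection flexibility is being used there. In your situation only $P$ is primary among the four same-coefficient variables at level~$1$, so the argument does not transfer.

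Your fallback with two $s3$-contractions handles the case where both results land at level~$4$, but the residual case---primary $R$ at level~$4$ and non-primary $Q$ at level~$5$---is dismissed as ``further bookkeeping on the remaining variables in the form.'' That bookkeeping is in fact the entire substance of this subcase: one must locate the two unaccounted-for variables and, depending on whether they sit at cycled levels $0,1,2,3,4,$ or $5$, invoke (respectively) Lemma~\ref{empty14} or Lemma~\ref{max5same}, Lemma~\ref{max5same}, Lemma~\ref{doneafterd}, a short contraction chain into Lemma~\ref{oneofempty14}, Lemma~\ref{oneofempty14} directly, or Lemma~\ref{doneaftert}. The paper carries out precisely this kind of case analysis (organized slightly differently, before cycling), and it is the longest part of the proof. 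So you have correctly identified where the difficulty lies but have not actually resolved it.
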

\begin{proof}

Without loss of generality, assume $k=0$.  By the pigeonhole principle, at least two variables in level 0 have the same $\pi$-coefficient.

Suppose two of the variables in level 5 have differing $\pi$-coefficients. First consider the case where not all of the variables in level 0 have the same $\pi$-coefficient.  After the change of variables and series of contractions:
$$({}^{2}_{1},0,0,0,0,{}^{1}_{1}) \xrightarrow{c} ({}^{1}_{1},{}^{2}_{1},0,0,0,0) \xrightarrow{d} (0,{}^{2*}_{2},0,0,0,0) \xrightarrow{d,d} (0,0,{}^{2*}_{0},0,0,0),$$
the solution follows from Lemma \ref{donefrom2}.

Now, consider the case where all of the variables in level 0 have the same $\pi$-coefficient.  We examine the locations of the remaining two variables.  First, assume level 3 is occupied.  After the change of variables:
$$({}^{3}_{0},0,0,1,0,{}^{1}_{1}) \xrightarrow{c} ({}^{1}_{1},{}^{3}_{0},0,0,1,0),$$
a solution follows from Lemma \ref{oneofempty14}.  Thus, assume level 3 is unoccupied.  By Lemma \ref{doneaftert}, assume that level 4 is unoccupied, and by Lemma \ref{doneafterd} with $k=5$, assume level 1 is unoccupied.  By Lemma \ref{doneafters2_2} with $k=0$, assume all variables in level 2 have the same $\pi$-coefficient.  If level 2 contains two variables, a solution follows from Lemma \ref{doneafters3_2} with $k=2$.  If level 0 contains four variables, a \textit{d}-contraction can be performed on level 5 so that there are five variables in level 0 with the same $\pi$-coefficient, and a solution follows from Lemma \ref{max5same}.  If the variables in level 5 can be used to form two pairs with differing $\pi$-coefficients, then two \textit{d}-contractions can be performed, and again a solution follows from Lemma \ref{max5same}.  If the variables in level 5 can be used to form two pairs, one with the same $\pi$-coefficients and one with differing $\pi$-coefficients, then a solution follows from Lemma  \ref{empty14} with $k=5$.  One case remains:

$$({}^{3}_{0},0,1,0,0,{}^{2}_{1}) \xrightarrow{c} ({}^{2}_{1},{}^{3}_{0},0,1,0,0) \xrightarrow{d} (1,{}^{4}_{0},0,1,0,0) \xrightarrow{s3}(1,{}^{2}_{0},0,1,1,0).$$
A solution follows from Lemma \ref{doneafters2} with $k=1$.  (Here, the \textit{s3}-contraction is performed with a variable which has already been raised one level, and so by Hensel's Lemma, we may assume that the resulting variable is created exactly three levels up.)

Now, suppose that the two variables in level 5 have the same $\pi$-coefficient.
The hypothesis implies that the three variables in level 0 do not have the same $\pi$-coefficient.  After the change of variables and series of contractions:
$$({}^{2}_{1},0,0,0,0,{}^{2}_{0}) \xrightarrow{c} ({}^{2}_{0},{}^{2}_{1},0,0,0,0) \xrightarrow{s2} (0,{}^{2}_{1},1*,0,0,0) \xrightarrow{d} (0,1,{}^{2*}_{0},0,0,0),$$
the solution follows from Lemma \ref{donefrom2}.

\end{proof}

\begin{lemma}\label{max4}
Suppose that (\ref{eq}) has at least four variables at the same level.  Then (\ref{eq}) has a nontrivial solution.
\end{lemma}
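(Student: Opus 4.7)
The plan is to proceed by case analysis on the $\pi$-coefficient distribution of the four variables at the designated level (WLOG level 0, since by Lemma \ref{max5} no other level contains five or more variables, so level 0 has exactly four and every other level has at most three) together with the positions of the remaining three variables. The distribution of the four level-0 variables across the two $\pi$-coefficient classes is one of $(4,0)$, $(3,1)$, or $(2,2)$.

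The $(2,2)$ case is immediate: two same-$\pi$ pairs and a mixed pair are both available at level 0, so Lemmas \ref{empty234} and \ref{empty14} together force levels 1 through 4 to be empty; the remaining three variables then lie at level 5, and Lemma \ref{3and2same} with $k=0$ closes the case since level 0 is non-uniform. In the $(3,1)$ case, only Lemma \ref{empty14} applies (two disjoint same-$\pi$ pairs cannot be extracted), so levels 1 and 4 are empty and the remaining three variables occupy some combination of levels 2, 3, and 5. Lemma \ref{doneafters2} excludes simultaneous occupation of levels 2 and 3, and any configuration with at least two variables at level 5 is again handled by Lemma \ref{3and2same} since $(3,1)$ is non-uniform. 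The surviving sub-configurations (variables clustered at level 2 or at level 3) are dispatched by Lemmas \ref{doneafters2_2}, \ref{doneafters3_2}, and \ref{doneaftert}, each sometimes preceded by a single s2-, s3-, or t-contraction that creates an auxiliary variable at level 4 or level 1 to reactivate Lemma \ref{doneaftert} or \ref{empty14}.

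The $(4,0)$ case is the hardest. Lemma \ref{empty234} forces levels 2, 3, and 4 to be empty, but Lemma \ref{empty14} is unavailable since no different-$\pi$ pair exists at level 0, so the three remaining variables are partitioned between levels 1 and 5. When the three variables at level 5 (if present) are non-uniform in $\pi$-coefficient, Lemma \ref{3and2same} with $k=0$ applies. The genuinely obstructed sub-cases are $(4, 1, 0, 0, 0, 2)$, $(4, 2, 0, 0, 0, 1)$, $(4, 3, 0, 0, 0, 0)$, and $(4, 0, 0, 0, 0, 3)$ with each nonempty level uniformly coloured. The sub-case $(4, 1, 0, 0, 0, 2)$ yields to an s3-contraction on the same-$\pi$ pair at level 5, which produces an auxiliary variable at level 2 or 3 and invokes Lemma \ref{empty234}. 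For each of the remaining sub-cases I would cycle once to bring the form into shape $(3, 4, 0, 0, 0, 0)$ (or a related shift) and redesignate level 1 as the primary level; then a t-contraction on three of the four same-$\pi$ variables at level 1 produces a primary at level 5, an s3-contraction on the remaining two produces either a second primary at level 5 (closing the case via Lemma \ref{donefrom4} at $k=1$) or a primary at level 4, and in the latter branch a t-contraction among the three same-$\pi$ level-0 variables produces a non-primary at level 4, placing two variables at level 4 with at least one primary so that Lemma \ref{donefrom4} finishes.

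The main obstacle is the bookkeeping in the $(4,0)$ case. Every s3 and t contraction branches into two or three sub-cases according to the exact level produced, and configurations such as a lone primary at level 4 with isolated variables at levels 1 and 5 resist any further contraction. Avoiding these dead ends requires ordering the contractions carefully, choosing between designating level 0 and level 1 of the cycled form, and occasionally exploiting the controllability of the $\pi$-coefficient in a d-contraction to pair a freshly produced non-primary at level 5 with an existing primary via an intermediate d-contraction at level 4.
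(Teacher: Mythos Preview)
Your decomposition by the $\pi$-coefficient profile $(4,0)$, $(3,1)$, $(2,2)$ at level $0$ is a workable organization, and your sketches for $(2,2)$ and $(3,1)$ are essentially sound. The problems are in the $(4,0)$ case.

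First, a concrete gap: the contraction sequence you describe for the ``remaining sub-cases'' (a $t$-contraction at level $1$, then an $s3$-contraction at level $1$, then a $t$-contraction on three same-$\pi$ variables at level $0$) requires three variables at level $0$ of the cycled form. That holds only for $(4,0,0,0,0,3)\xrightarrow{c}(3,4,0,0,0,0)$; after cycling $(4,2,0,0,0,1)$ or $(4,3,0,0,0,0)$ there are only one or zero variables at level $0$, so the last step is unavailable. You also never justify why level $1$ may be assumed uniform (it follows from Lemma~\ref{doneafterds2}, but you do not invoke it). Finally, your closing appeal to Lemma~\ref{donefrom4} is off by one: with level $1$ designated you need two variables at level $5$, not level $4$, so one more contraction step is required.

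Second, and more importantly, you have misidentified the hard case. In the $(4,0)$ profile the four level-$0$ variables always form two same-$\pi$ pairs, so Lemma~\ref{empty234} is available throughout. If at least two variables are at level $5$, $s3$-contract them to level $2$ or $3$ and reapply Lemma~\ref{empty234}; otherwise at least two of the remaining three variables sit at level $1$, and contracting \emph{those} produces a variable at level $2$ or $3$, again triggering Lemma~\ref{empty234}. That two-line argument replaces your entire $t$/$s3$/$t$ machinery. The paper organizes the proof this way: it first disposes of the case of two or more variables at level $5$ uniformly (via Lemma~\ref{3and2same} followed by the $s3$ trick), then observes that both $(4,0)$ and $(2,2)$ admit two same-$\pi$ pairs and are finished by the level-$1$ contraction, so the only profile needing real work is $(3,1)$ with at most one variable at level $5$. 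Your instinct is backwards: it is precisely the \emph{absence} of two disjoint same-$\pi$ pairs in $(3,1)$ that blocks the cheap reapplication of Lemma~\ref{empty234} and forces the finer case analysis.
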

\begin{proof} By Lemma \ref{max5}, we may assume that the level with at least four variables has exactly four variables; without loss of generality, assume that this is level 0.

First, suppose at least two of the remaining three variables are in level 5.  By Lemma \ref{3and2same}, assume that all the variables in level 0 have the same $\pi$-coefficient, and that all the variables in level 5 have the same $\pi$-coefficient.  Thus the four variables can be used to form two pairs, each having the same $\pi$-coefficient.  Perform an \textit{s3}-contraction on the two variables in level 5.  After a change of variables, the resulting variable will be in level 2 or 3, and the solution follows from Lemma \ref{empty234}.  Thus, assume that level 5 contains at most one variable.  We will examine the locations of the remaining two variables.

By Lemma \ref{empty234} if the four variables form two pairs with matching $\pi$-coefficients we may assume levels 2, 3, and 4 are unoccupied, and therefore that level 1 contains at least two variables.  These may be contracted to a variable in level 2 or 3, and so the solution follows again from Lemma \ref{empty234}.

Thus, assume that three of the four variables in level 0 are in one $\pi$-coefficient class, and the one remaining variable is in the other class.  By Lemma \ref{empty14}, we may assume levels 1 and 4 are unoccupied, and by Lemma \ref{doneafters2} that one of levels 2 or 3 is unoccupied.

If two of the remaining variables lie in level 2 and have the same $\pi$-coefficient, they can be contracted to a variable in level 4 and the solution follows from Lemma \ref{doneaftert}.  Suppose they have differing $\pi$-coefficients.  After the contraction: 
$$(^{3}_{1},0,{}^{1}_{1},0,0,0) \xrightarrow{s2}
({}^{1}_{1},0,{}^{2*}_{1},0,0,0),$$
the solution follows from Lemma \ref{donefrom2}.

Thus we may assume both variables lie in level 3.  If they have differing $\pi$-coefficients, by the pigeonhole principle there is a pair of variables in level 0 with the same $\pi$-coefficients, and a solution follows from Lemma \ref{doneafters3_2}.  Thus assume they have the same $\pi$-coefficient.  If there is a variable in level 5, by a change of variables we may consider the variables in level 0 to be in level 6, and the solution follows from Lemma \ref{doneafters2} with $k=3$.  Thus we may assume there are three variables in level 3, all with the same $\pi$-coefficient.  After the change of variables and series of contractions:
$$({}^{3}_{1},0,0,{}^{3}_{0},0,0) \xrightarrow{c} ({}^{3}_{0},0,0,{}^{3}_{1},0,0) \xrightarrow{t} (1,0,0,{}^{3}_{1},1*,0) \xrightarrow{d} (1,0,0,2,2*,0),$$
the solution follows from Lemma \ref{donefrom4}.
\end{proof}

\begin{lemma} \label{3diffthen2}
Suppose that (\ref{eq}) has at least three variables in level $k$ not all having the same $\pi$-coefficient, and at least two variables in level $k+1$.  Then (\ref{eq}) has a nontrivial solution.
\end{lemma}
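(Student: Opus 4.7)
Plan: Without loss of generality take $k = 0$. By Lemma~\ref{max4} I may assume no level contains four or more variables, so level~$0$ has exactly three variables in a $2+1$ $\pi$-coefficient split, level~$1$ has either two or three variables, and the remaining two ``extra'' variables occupy levels $2$ through $5$. The argument splits on the content of level~$1$, with the bulk of the work reserved for the final subcase.

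First I would handle the cases where level~$1$ has three variables, or has two variables with differing $\pi$-coefficients. If level~$1$ has three variables in a $2+1$ split, a $d$-contraction of the differing-$\pi$ pair at level~$1$ places a variable at level~$2$ and leaves one at level~$1$, and Lemma~\ref{doneafterd} at $k=0$ finishes. If level~$1$ has three variables all in a single $\pi$-class, then $d$-contracting the differing-$\pi$ pair at level~$0$ (choosing the new variable's $\pi$-coefficient to match the level-$1$ class) yields four variables at level~$1$, and Lemma~\ref{max4} finishes. If level~$1$ has exactly two variables with differing $\pi$-coefficients, I designate level~$0$, $s2$-contract the same-$\pi$ pair at level~$0$ to a primary at level~$2$, then $d$-contract the level-$1$ pair to a non-primary at level~$2$, choosing its $\pi$-coefficient to match the primary's; Lemma~\ref{donefrom2} finishes.

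The main obstacle is the remaining case: level~$1$ has exactly two variables with the same $\pi$-coefficient, leaving the two extras in levels $2$--$5$. An extra at level~$2$ is handled by Lemma~\ref{doneafterd} at $k=0$ and an extra at level~$4$ by Lemma~\ref{oneofempty14} at $k=0$, so I may assume both extras lie in levels~$3$ or~$5$. If both are at level~$3$, Lemma~\ref{doneafters3_2} at $k=0$ handles differing $\pi$; for same $\pi$, three cyclic shifts bring the extras to level~$0$ and Lemma~\ref{doneafters3} at $k=0$ finishes. If both are at level~$5$, one shift then Lemma~\ref{doneafterd} at $k=0$ handles differing $\pi$, while two shifts then Lemma~\ref{doneafterds2} at $k=1$ handle same $\pi$. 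The most intricate sub-subcase has one extra at level~$3$ and one at level~$5$: I would $s3$-contract the level-$1$ same-$\pi$ pair, and if the new variable is at level~$5$ then the two level-$5$ variables can be contracted to produce a variable at level at least $6 = 1+5$, so Hensel's Lemma applies; if instead it is at level~$4$, I designate level~$0$ and $s3$-contract the level-$0$ same-$\pi$ pair to a primary at level~$3$ or~$4$, finishing by Lemma~\ref{donefrom4} (primary at $4$, paired with the non-primary there) or by Lemma~\ref{donefrom2}/Lemma~\ref{donefrom4} (primary at $3$, paired with the existing extra, possibly after a further $d$-contraction).
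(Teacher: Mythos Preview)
Your argument is correct; every branch closes. The route, however, differs from the paper's in two places worth noting. First, when level~$k+1$ holds three variables the paper does not split on their $\pi$-coefficients: it cycles so that the two occupied levels become $5$ and $0$ and then invokes Lemma~\ref{3and2same} in one stroke. Your direct treatment (a $d$-contraction inside level~$1$ in the $2+1$ case, and a $d$-contraction from level~$0$ into level~$1$ followed by Lemma~\ref{max4} in the $3+0$ case) avoids that heavier lemma, which is a genuine simplification. Second, in the hard case where level~$1$ has two variables with the same $\pi$-coefficient, the paper organizes the endgame differently: rather than treating the placements $\{3,3\}$, $\{5,5\}$, $\{3,5\}$ separately, it first shows that any occupant of level~$5$ can be dispatched by $d$-contracting level~$0$ into level~$1$ to obtain three variables with the same $\pi$-coefficient there, cycling, and applying a $t$-contraction (Lemma~\ref{three}) to reach level~$4$; the remaining situations (both extras at level~$3$) are then reduced to that one by a single contraction. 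Your version never uses a $t$-contraction at all, trading the paper's unifying trick for a slightly longer but entirely elementary case analysis. Both approaches are sound; yours is more self-contained, the paper's is shorter once Lemma~\ref{3and2same} and the $t$-contraction are on the table.
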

\begin{proof}
Without loss of generality, assume $k=0$.  By Lemma \ref{max4}, level 0 has exactly three variables.  If there are at least three variables in level 1, by multiplying by $\pi^5$ we may consider level 5 and 6 to have three variables each.  By a change of variables, the variables in level 6 may be considered to be in level 0.  Then a solution follows from Lemma \ref{3and2same}.  Thus assume level 1 contains exactly two variables. Suppose that the two variables in level 1 have different $\pi$-coefficients.  After the series of contractions:
$$({}^{2}_{1},{}^{1}_{1},0,0,0,0) \xrightarrow{s2} (1,{}^{1}_{1},1*,0,0,0) \xrightarrow{d} (1,0,{}^{2*}_{0},0,0,0),$$
the solution follows from Lemma \ref{donefrom2}.

Thus, suppose that the two variables in level 1 have the same $\pi$-coefficient.  By Lemma \ref{doneafterd}, assume level 2 is unoccupied.  By Lemma \ref{oneofempty14}, assume level 4 is unoccupied.  Suppose level 5 is occupied.  After the contractions and change of variables:
$$({}^{2}_{1},{}^{2}_{0},0,0,0,1) \xrightarrow{d} (1,{}^{3}_{0},0,0,0,1) \xrightarrow{c}
({}^{3}_{0},0,0,0,1,1) \xrightarrow{t}
(1,0,0,0,2*,1),$$
the solution follows from Lemma \ref{donefrom4}.  Thus, assume level 5 is unoccupied, and the remaining two variables lie in level 3.  If they have differing $\pi$-coefficients, then they can be contracted to a variable in level 4, and again a solution follows from Lemma \ref{oneofempty14}.  If they have the same $\pi$-coefficient, they can be contracted to a variable in level 5, and a solution follows from the case described above.
\end{proof}

\begin{lemma}\label{max3_1}
Suppose that (\ref{eq}) has at least three variables at the same level, not all having the same $\pi$-coefficient.  Then (\ref{eq}) has a nontrivial solution.
\end{lemma}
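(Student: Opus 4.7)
The approach is a case analysis. Without loss of generality I place the three given variables in level 0; since they are not all of the same $\pi$-coefficient they must form a $({}^{2}_{1})$ block, and by Lemma \ref{max4} level 0 contains exactly these three variables. It remains to locate the four other variables among levels 1 through 5.

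I first apply the earlier reduction lemmas to sharply restrict the allowed distributions. Lemma \ref{3diffthen2} caps level 1 at one variable, and Lemma \ref{3and2same} caps level 5 at one variable (its hypothesis holds automatically since level 0 is not monochromatic). Using a same-$\pi$-coefficient pair in level 0: Lemma \ref{doneafters2} forbids levels 2 and 3 being simultaneously occupied, Lemma \ref{doneafters3} forbids levels 3 and 4 being simultaneously occupied, and Lemmas \ref{doneafters2_2} and \ref{doneafters3_2} force any two or more variables at level 2 (respectively level 3) to share a $\pi$-coefficient. Using the different-$\pi$-coefficient pair in level 0: if level 1 is nonempty then Lemma \ref{doneafterd} empties level 2 and Lemma \ref{oneofempty14} empties level 4. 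Together with the cap of three variables per level from Lemma \ref{max4}, these constraints leave only a small list of candidate configurations.

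The bulk of the surviving configurations succumb to Lemma \ref{doneaftert}: whenever a level $L_1$ holds three variables of one $\pi$-coefficient and some other occupied level $L_2$ satisfies $L_2 \equiv L_1 + 4 \pmod 6$, an appropriate cyclic shift places $L_1$ at some level $k$ and $L_2$ at level $k+4$, finishing via Lemma \ref{doneaftert}. The configurations lacking a same-$\pi$-coefficient triple are handled by one or two preparatory $d$-, $s2$-, or $s3$-contractions that either produce a fourth variable at some level (finished by Lemma \ref{max4}) or expose a form to which Lemma \ref{3and2same}, \ref{doneafterd}, \ref{oneofempty14}, \ref{doneafters2}, \ref{doneafters3}, or \ref{doneafters2_2} applies, possibly after cycling. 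A representative example is $({}^{2}_{1}, 0, 1, 0, 3, 0)$: if the three level-4 variables share a $\pi$-coefficient, one cyclic shift invokes Lemma \ref{doneaftert}, and otherwise one cycles to $({}^{2}_{1}, 0, {}^{2}_{1}, 0, 1, 0)$ and applies Lemma \ref{doneafters2_2}.

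The step I expect to be the main obstacle is the lone configuration $({}^{2}_{1}, 0, 0, {}^{3}_{0}, 0, 1)$. Here the matching $\pi$-coefficient triple lives at level 3, but $3 + 4 \equiv 1 \pmod 6$ and level 1 is empty, so no cyclic shift brings this configuration within range of Lemma \ref{doneaftert}. The plan is to cycle to $({}^{3}_{0}, 0, 1, {}^{2}_{1}, 0, 0)$, designate level 0, and perform a $t$-contraction on the matched triple to produce a primary variable at level at least 4; Hensel's lemma disposes of the level-$\geq 5$ case, and in the level-4 case the different-$\pi$-coefficient pair inside the $({}^{2}_{1})$ at level 3 permits a $d$-contraction producing a second variable at level 4, after which Lemma \ref{donefrom4} finishes.
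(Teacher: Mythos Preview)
Your approach is essentially the paper's: the same reduction lemmas applied with $k=0$ yield the same constraints (levels~1 and~5 capped at one variable, levels~2--3 and~3--4 not simultaneously occupied, levels~2 and~3 monochromatic), and the surviving configurations are then dispatched case by case. Two remarks.

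First, your write-up is a sketch rather than a proof: phrases like ``the bulk succumb to Lemma~\ref{doneaftert}'' and ``handled by one or two preparatory contractions'' leave the reader to reconstruct roughly ten configurations and verify each. The paper carries out this enumeration explicitly, walking through the cases ``level~1 occupied'', ``level~2 has at least two'', ``level~3 occupied'', ``level~4 occupied'' in turn. Your outline is sound and every case does work, but as written it would need to be fleshed out.

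Second, the configuration you single out as the main obstacle, $({}^{2}_{1},0,0,{}^{3}_{0},0,1)$, is not actually exceptional. The paper dispatches it directly with Lemma~\ref{doneafters3_2} at $k=3$: two of the three level-3 variables share a $\pi$-coefficient, and level $3+3\equiv 0$ holds the mixed block ${}^{2}_{1}$, so the hypothesis is satisfied. You already invoked this lemma with $k=0$ to force level~3 monochromatic; applying it again with $k=3$ finishes immediately. Your cycle-then-$t$-then-$d$ argument is correct, but unnecessary.
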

\begin{proof}

By Lemma \ref{max4}, we may assume that the level(s) with at least three variables have exactly three variables.  Without loss of generality assume that one of the levels with three variables is level 0.  By Lemma \ref{3and2same}, we may assume there is at most one variable in level 5.  We examine the locations of the remaining three variables.

By Lemma \ref{doneafters2}, we may assume that at least one of levels 2 and 3 is unoccupied, and by Lemma \ref{doneafters3} that at least one of levels 3 and 4 is unoccupied.  By Lemma \ref{doneafterd}, we may assume that at least one of levels 1 and 2 is unoccupied, and by Lemma \ref{oneofempty14}, we may assume at least one of levels 1 and 4 is unoccupied.

Suppose level 1 is occupied.  By Lemma \ref{3diffthen2}, assume level 1 contains exactly one variable.  By the above considerations, assume levels 2 and 4 are unoccupied.  If there are two variables with different $\pi$-coefficients in level 3, they can be contracted to a variable in level 4 and the solution follows from Lemma \ref{oneofempty14}.  Thus, assume there are at least two variables in level 3, all having the same $\pi$-coefficient.  By Lemma \ref{doneafters2} with $k=3$, assume that level 5 is unoccupied, and thus that level 3 has exactly three variables with the same $\pi$-coefficient, and then by Lemma \ref{doneaftert} with $k=3$, there is a nontrivial solution.  Thus assume level 1 is unoccupied.

Suppose level 2 has at least two variables, and so assume levels 1 and 3 are unoccupied.  By Lemma \ref{doneafters2_2} with $k=0$, assume the variables in level 2 have the same $\pi$-coefficient.  By Lemma \ref{doneaftert} with $k=2$, assume there are at most two variables in level 2, and thus level 4 must contain at least one variable.  If level 5 also contains a variable, then the solution follows from Lemma \ref{doneafters2} with $k=2$, and so assume level 5 is unoccupied, and thus level 4 contains two variables.  If the two variables have differing $\pi$-coefficients, the solution follows from Lemma \ref{doneafters2_2} with $k=2$.  If the two variables have the same $\pi$-coefficient, the solution follows from Lemma \ref{doneafters2_2} with $k=4$.  Thus assume that level 2 contains at most one variable.

Suppose level 3 is occupied.  By the above, assume levels 1, 2, and 4 are unoccupied, and so level 3 contains exactly three variables and level 5 contains exactly one.  By the pigeonhole principle, at least two of the variables in level 3 must have the same $\pi$-coefficient, and the solution follows from Lemma \ref{doneafters3_2} with $k=3$.  Thus assume that level 3 is unoccupied.

Suppose level 4 is occupied.  By the preceding reasoning, assume levels 1 and 3 are unoccupied and levels 2 and 5 contain at most one variable.  Thus assume level 4 contains at least two variables.  If two of these variables have the same $\pi$-coefficient, then the solution follows from Lemma \ref{doneafters2_2} with $k=4$.  Thus assume level 4 contains exactly two variables with differing $\pi$-coefficients, level 2 contains one variable, and level 5 contains one variable.  The solution follows from Lemma \ref{doneafterd} with $k=4$.
\end{proof}

\begin{lemma}\label{33}
Suppose that (\ref{eq}) has at least three variables at level $k$ and at least three variables at level $k+1$.  Then (\ref{eq}) has a nontrivial solution.
\end{lemma}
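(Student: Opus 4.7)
The plan is to invoke Lemmas \ref{max3_1} and \ref{max4} to reduce to a rigid configuration, and then split into four cases determined by the location of the one extra variable, finishing each case by a direct appeal to Lemma \ref{doneaftert} or Lemma \ref{doneafters2}.

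First, applying Lemma \ref{max3_1} we may assume that the three variables at level $k$ share a common $\pi$-coefficient and likewise for level $k+1$; by Lemma \ref{max4} no level contains four or more variables. Hence exactly three variables sit at each of levels $k$ and $k+1$, and the remaining seventh variable occupies some other level. By cyclic interchangeability we take $k=0$, so this seventh variable lies at level $\ell \in \{2,3,4,5\}$.

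When $\ell = 4$, Lemma \ref{doneaftert} with $k=0$ finishes immediately, and when $\ell = 5$, Lemma \ref{doneaftert} with $k=1$ does the same. For $\ell = 2$ the form is $({}^{3}_{0}, {}^{3}_{0}, 1, 0, 0, 0)$; performing an \textit{s2}-contraction on two of the three level-1 variables manufactures a variable at level 3, after which the form $({}^{3}_{0}, {}^{1}_{0}, 1, 1, 0, 0)$ satisfies the hypotheses of Lemma \ref{doneafters2} with $k=0$, which concludes the case.

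The main obstacle is $\ell = 3$, where no preliminary lemma applies without first supplying a variable at level 4. Here I would designate level 1 and perform a \textit{t}-contraction on the three level-0 variables (using that the field is $\mathbb{Q}_2(\sqrt{-1})$ or $\mathbb{Q}_2(\sqrt{-5})$ and the three variables share a $\pi$-coefficient); by the standing convention we may assume the result sits exactly at level 4, for otherwise Hensel's Lemma finishes directly from the level-0 inputs. The resulting form $({}^{1}_{0}, {}^{3}_{0}, 0, 1, 1, 0)$ now matches the hypotheses of Lemma \ref{doneafters2} with $k=1$, completing this case as well. Here the \textit{t}-contraction is used not to raise a primary but to feed Lemma \ref{doneafters2} the level-4 variable it requires.
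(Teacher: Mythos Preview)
Your proof is correct and follows the same overall decomposition as the paper: reduce via Lemmas \ref{max4} and \ref{max3_1} to exactly three same-$\pi$-coefficient variables at each of levels $0$ and $1$, then case on the position $\ell\in\{2,3,4,5\}$ of the seventh variable, handling $\ell=4,5$ by Lemma \ref{doneaftert} and $\ell=2$ by an \textit{s2}-contraction on level $1$ followed by Lemma \ref{doneafters2}. The only difference is in the $\ell=3$ case: the paper performs an \textit{s2}-contraction on level $1$ first and then splits on whether the new level-$3$ variable matches the existing one (finishing via Lemma \ref{doneaftert} or Lemma \ref{donefrom2}), whereas you perform the \textit{t}-contraction on level $0$ first to manufacture a level-$4$ variable and then invoke Lemma \ref{doneafters2} with $k=1$, which absorbs that case split internally. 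Your packaging is marginally cleaner but the underlying argument is the same.
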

\begin{proof}
Without loss of generality, assume $k=0$.  By Lemma \ref{max4} and Lemma \ref{max3_1}, assume that levels 0 and 1 each have exactly three variables with the same $\pi$-coefficient.  By Lemma \ref{doneaftert}, assume that levels 4 and 5 are unoccupied.  Thus, assume the remaining variable is in level 2 or 3.  First, suppose it is in level 2.  After the contraction

$$({}^{3}_{0},{}^{3}_{0},1,0,0,0) \xrightarrow{s2} ({}^{3}_{0},1,1,1,0,0)$$
the solution follows from Lemma \ref{doneafters2}.  Now, suppose the variable is in level 3.  After an initial \textit{s2}-contraction, there are two possible cases:

$({}^{3}_{0},{}^{3}_{0},0,1,0,0) \xrightarrow{s2}$

\begin{itemize}
    \item $({}^{3}_{0},1,0,{}^{1}_{1},0,0) \xrightarrow{d} ({}^{3}_{0},1,0,0,1,0),$
    and the solution follows from Lemma \ref{doneaftert}.
    \item $({}^{3}_{0},1,0,{}^{2*}_{0},0,0),$
    and the solution follows from Lemma \ref{donefrom2} with $k=1$.
\end{itemize}

\end{proof}

\begin{lemma}\label{max3}
Suppose that (\ref{eq}) has at least three variables at the same level.  Then (\ref{eq}) has a nontrivial solution.
\end{lemma}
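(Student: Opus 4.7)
The plan is to exploit Lemmas \ref{max4} and \ref{max3_1} to reduce to the situation where level 0 contains exactly three variables, all sharing a common $\pi$-coefficient, and then enumerate how the remaining four variables can be distributed among levels 1 through 5. Writing $n_i$ for the number of variables at level $i$, many earlier lemmas immediately pare down the space of configurations: Lemma \ref{doneaftert} with $k = 0$ forces $n_4 = 0$; Lemma \ref{33} (preceded by a cyclic shift when $n_5 = 3$) forces $n_1, n_5 \leq 2$; Lemma \ref{doneafters2} with $k = 0$ forces at most one of $n_2, n_3$ to be positive; and Lemmas \ref{doneafterds2}, \ref{doneafters2_2}, \ref{doneafters3_2}, \ref{3and2same}, and \ref{max3_1} together require the variables in every multiply-occupied level other than level 0 to share a common $\pi$-coefficient. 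What remains is on the order of a dozen configurations indexed by $(n_1, n_2, n_3, n_5)$.

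The central device throughout is the cyclic identification of levels modulo $6$ noted after Lemma \ref{max5}: the triple at level 0 can play the role of a variable at level $6$ in a Lemma \ref{doneafters2} or \ref{doneafters3} invocation with $k = 3$, and likewise at level $7 \equiv 1$. Consequently, whenever level 3 carries two same-$\pi$-coefficient variables, Lemma \ref{doneafters2} or \ref{doneafters3} with $k = 3$ closes the case, using the triple at level 0 together with whichever of levels 1 or 5 is occupied. Configurations in which level 2 or level 3 carries three variables (necessarily with a common $\pi$-coefficient) collapse via Lemma \ref{doneaftert} with $k = 2$ or $k = 3$, with the lone exception $(3,0,0,3,0,1)$ succumbing instead to Lemma \ref{doneafters2} with $k = 3$. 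For the remaining configurations one strategic contraction usually suffices: an \textit{s2} on a pair at level 1 produces a variable at level 3 that combines with existing variables via Lemma \ref{doneafters2} or \ref{doneafters3_2} with $k = 0$, and for $(3, 2, 0, 0, 0, 2)$ an \textit{s2} on the pair at level 5 produces three variables at level 1, finishing by Lemma \ref{33} or Lemma \ref{max3_1}.

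I expect the main obstacle to be the configuration $(3, 1, 0, 1, 0, 2)$, the one case in which neither a direct invocation nor a single contraction suffices. My plan is to first \textit{s2}-contract the pair at level 5 into a new variable at level $7 \equiv 1$, producing a form with two variables at level 1 whose $\pi$-coefficients are outside our control. If they differ, Lemma \ref{doneafterds2} with $k = 0$ finishes immediately; otherwise I would \textit{t}-contract the triple at level 0 and, assuming by Hensel's lemma that the resulting variable lands exactly at level 4, apply Lemma \ref{doneafters2} with $k = 1$. Aside from this two-step case, the proof is a careful bookkeeping exercise; the principal risk is enumeration error rather than a genuinely hard step.
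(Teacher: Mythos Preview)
Your overall strategy matches the paper's: reduce via Lemmas~\ref{max4} and~\ref{max3_1} to three same-$\pi$-coefficient variables at level~0, then enumerate the distributions $(n_1,n_2,n_3,n_5)$ of the remaining four variables and dispatch each with a short contraction argument. The reductions you list ($n_4=0$, $n_1,n_5\le 2$, at most one of $n_2,n_3$ positive, same $\pi$-coefficient in multiply-occupied levels) are exactly the paper's.

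There is, however, the enumeration error you yourself flagged as the principal risk. Your stated technique for the ``remaining configurations''---an $s2$ on a level-1 pair followed by Lemma~\ref{doneafters2} or~\ref{doneafters3_2} with $k=0$---requires $n_1=2$. The configurations $(3,1,2,0,0,1)$, $(3,0,2,0,0,2)$, and $(3,1,1,0,0,2)$ all have $n_1\le 1$ and are not covered by any technique you name. They are easily handled (Lemma~\ref{doneafters3} with $k=2$ for the first two, Lemma~\ref{doneafters2} with $k=5$ for the third), so the gap is minor, but as written your coverage is incomplete.

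It is also worth noting that the paper organizes the endgame more uniformly than you do. First, it eliminates $n_2\ge 2$ in one stroke: $s2$-contract the level-2 pair to level~4 and apply Lemma~\ref{doneaftert} with $k=0$. Second, for all $n_5=2$ cases simultaneously---including your ``main obstacle'' $(3,1,0,1,0,2)$---it $s2$-contracts a pair from level~0 to level~2 and finishes with Lemma~\ref{doneafters2} at $k=5$ (two same-$\pi$ at level~5, one at level~$7\equiv 1$, one at level~$8\equiv 2$). This one-line argument replaces your two-step treatment of $(3,1,0,1,0,2)$, and also absorbs $(3,1,1,0,0,2)$ and $(3,2,0,0,0,2)$ without separate handling.
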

\begin{proof}
By Lemma \ref{max4}, we may assume that any level with at least three variables has exactly three variables.  By Lemma \ref{max3_1}, we may assume that the variables in any level with three variables all have the same $\pi$-coefficient.   Without loss of generality assume that one of the levels with three variables is level 0.

By Lemma \ref{doneafters2}, we may assume that at least one of levels 2 and 3 is unoccupied.  By Lemma \ref{doneaftert}, we may assume that level 4 is unoccupied. By Lemma \ref{33}, assume that levels 5 and 1 each contain at most two variables.

By Lemma \ref{doneafterds2}, we may assume that all variables in level 1 have the same $\pi$-coefficient.  By Lemma \ref{doneafters2_2},  assume that all variables in level 2 have the same $\pi$-coefficient.  If there are at least two variables in level 2, they may be contracted to a variable in level 4, and a solution follows from Lemma \ref{doneaftert}.  Thus assume that level 2 contains at most one variable.  If there are two variables in level 3 having differing $\pi$-coefficients, they may be contracted to a variable in level 4 and a solution follows from Lemma \ref{doneaftert}, and so assume all variables in level 3 have the same $\pi$-coefficient.  By Lemma \ref{3and2same}, we may assume that all variables in level 5 have the same $\pi$-coefficient.

First suppose that level 5 contains no variables, and so there are four variables that occupy levels 1, 2, and 3.  By the assumptions made, assume level 1 contains at most two, level 2 contains at most one, and thus level 3 contains a variable.  Therefore, assume that level 2 is unoccupied, and so level 1 is occupied, and level 3 has at least two variables.  A solution follows from Lemma \ref{doneafters3} with $k=3$.  Thus we may assume that level 5 contains either one or two variables, and so (by Lemma \ref{doneafters2} with $k=3$ and the assumptions above) that levels 2 and 3 together contain at most one variable, and thus that level 1 contains either one or two variables.

Now suppose that level 5 contains exactly one variable.  By the above assumptions, level 1 contains exactly two variables, and levels 2 and 3 together contain exactly one variable.  Perform an \textit{s2}-contraction on the two variables in level 1 to create a new variable in level 3.  If level 2 contains a variable, then a solution follows from Lemma \ref{doneafters2} with $k=0$.  Thus suppose that level 2 is unoccupied and level 3 contains exactly one variable.  If this variable has the same $\pi$-coefficient as the variable resulting from the contraction, the solution follows from Lemma \ref{donefrom2}.  However, if the $\pi$-coefficients of the two variables differ, they can be contracted to a variable in level 4 and the solution follows from Lemma \ref{doneaftert}.

Thus, we may assume that level 5 contains exactly two variables having the same $\pi$-coefficient and level 1 contains at least one variable.  Perform an \textit{s2}-contraction on a pair of variables from level 0 to produce a variable in level 2.  A solution follows from Lemma \ref{doneafters2} with $k=5$.
\end{proof}

\begin{lemma}\label{max2}
Suppose that (\ref{eq}) has a least two variables at the same level.  Then (\ref{eq}) has a nontrivial solution.
\end{lemma}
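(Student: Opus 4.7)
The plan is to invoke Lemma \ref{max3} to reduce to configurations in which every level holds at most two variables; since seven variables are distributed over six levels, pigeonhole then forces at least one level to hold exactly two. After a cyclic change of variables this level is $0$, and the possible distributions $(s_0,\ldots,s_5)$, up to cyclic shift, are $(2,1,1,1,1,1)$, $(2,2,1,1,1,0)$, and $(2,2,2,1,0,0)$; each will be treated as a separate branch.

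In each branch I would split further on whether the two variables in level $0$ share a $\pi$-coefficient. When they agree, the main finishing lemmas are \ref{doneafters2} (needing levels $k+2,k+3$ occupied) and \ref{doneafters3} (needing $k+3,k+4$), with \ref{doneafters2_2}, \ref{doneafters3_2}, and \ref{doneafterds2} as fallbacks when a neighboring level already holds two variables. When they differ, the main tools are \ref{doneafterd} (levels $k+1,k+2$) and \ref{oneofempty14} (levels $k+1,k+4$). The fully occupied case $(2,1,1,1,1,1)$ closes immediately from \ref{doneafterd} or \ref{doneafters2} because no relevant level is empty. For $(2,2,1,1,1,0)$ the single empty slot together with the freedom to designate either of the two $2$-levels as level $0$ should place the empty level outside the critical window in all but a few subcases; the residual subcases I would close by first performing an $s2$- or $d$-contraction on the other $2$-level to populate the previously empty position, after which an earlier lemma applies.

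The principal obstacle will be the distribution $(2,2,2,1,0,0)$, where two empty levels permit configurations in which every $2$-level is \emph{simultaneously} obstructed from each of the direct finishing lemmas (the critical obstruction being that both $k+2$ and $k+4$, or both $k+1$ and $k+3$, are empty). To handle these I plan to enumerate the cyclic patterns of the three $2$-levels---the consecutive pattern with gaps $(1,1,4)$, the block-plus-singleton patterns with gaps $(1,2,3)$ and $(1,3,2)$, and the evenly spaced pattern $(2,2,2)$---and for each exploit (i) the freedom to choose which $2$-level to designate as $0$, (ii) the $\pi$-coefficient structure at the other two $2$-levels, and (iii) a preparatory $s2$-, $d$-, or $t$-contraction on one of those other $2$-levels to populate an empty level. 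The resulting configuration should then fall under one of the finishing lemmas \ref{doneafters2}, \ref{doneafters3}, \ref{oneofempty14}, \ref{donefrom4}, or under the pile-up lemmas \ref{max3} and \ref{max4}, closing the proof. The finickiest subcases will be those in which all three $2$-levels carry the same-$\pi$ type and so force reliance on $s3$-contractions whose exact output level is only partially controlled; there Hensel's Lemma, together with the $k+3$-versus-$k+4$ dichotomy for $s3$-contractions that already appears in \ref{doneafters3} and \ref{empty234}, should still allow the argument to close.
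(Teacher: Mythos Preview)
Your plan is workable in principle but takes a different route from the paper, and as written it undercounts the cases.

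The paper does not enumerate the three multisets of level-sizes at all. Instead it invokes \emph{normalization} (Lemma~3 of \cite{knapp2016solubility}): one may cyclically permute so that the first $n$ levels together contain at least $n+1$ variables for every $1\le n\le 6$. This single device forces level~$0$ to hold two variables, level~$1$ to be occupied, and level~$5$ to hold at most one variable, all at once. The paper then splits only on the $\pi$-coefficient type at level~$0$ (same or different), and within each branch re-applies normalization to pin down which further levels must be occupied. The whole argument never distinguishes your three multisets and runs to only a handful of short subcases.

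Your enumeration-by-multiset approach could be pushed through, but you have understated its cost. The three tuples you list are the possible \emph{multisets} of level-sizes, not the distributions up to cyclic shift: both $\{2,2,1,1,1,0\}$ and $\{2,2,2,1,0,0\}$ have ten cyclically inequivalent arrangements. In the latter case you enumerate four gap-patterns for the three $2$-levels, but within each pattern the position of the lone $1$ among the remaining three slots still varies (three choices, collapsing to one only for the $(2,2,2)$ pattern), giving ten configurations rather than four. Your treatment of $\{2,2,1,1,1,0\}$ is entirely heuristic (``should place the empty level outside the critical window''). None of this is fatal---the finishing lemmas you name are the right ones---but the paper's normalization sidesteps almost all of this bookkeeping and is the idea your proposal is missing.
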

\begin{proof}

By Lemma \ref{max3}, we may assume that any level with at least two variables has exactly two variables.  It is possible by applying an appropriate cyclic permutation to have a form with at least $n+1$ variables in the first $n$ levels for $1 \le n \le 6$  (cf. Lemma 3 of \cite{knapp2016solubility}).  This process is known as normalization.  It follows that we may assume that level 0 has two variables, level 1 is occupied, and level 5 has at most one variable.

First suppose that the two variables in level 0 have differing $\pi$-coefficients.  By Lemma \ref{doneafterd}, assume level 2 is unoccupied.  Thus levels 0 through 2 contain at most four variables, and so by normalization assume that level 3 is occupied.  Perform a \textit{d}-contraction on the pair in level 0 so that there are two variables in level 1 with the same $\pi$-coefficient, and use this pair to perform an \textit{s3}-contraction from level 1.  By Hensel's Lemma, assume this results in a new variable in level 4.  By Lemma \ref{donefrom4}, assume level 4 is unoccupied, and therefore by normalization that level 3 contains two variables.  If the two variables in level 3 have differing $\pi$-coefficients, a solution follows from Lemma \ref{doneafters2_2} with $k=1$.  If these two variables have the same $\pi$-coefficient, omit the initial \textit{d}-contraction, and a solution follows from Lemma \ref{doneafters2} with $k=3$.

Thus, we may assume that the two variables in level 0 have the same $\pi$-coefficient.  Suppose that level 1 contains just one variable.  By normalization, assume level 2 is occupied, and so by Lemma \ref{doneafters2} that level 3 is unoccupied, and again by normalization, that level 2 contains two variables, and by Lemma \ref{doneafters2_2} that they have the same $\pi$-coefficient.  By Lemma \ref{doneafters3} with $k=2$, assume that level 5 is unoccupied, and so level 4 contains the remaining two variables.  By Lemma \ref{doneafters2_2} with $k=2$, assume that they have the same $\pi$-coefficient.  A solution follows from Lemma \ref{doneafters3} with $k=4$.

Thus we may assume level 1 contains two variables, and by Lemma \ref{doneafterds2} that they have the same $\pi$-coefficient.  If level 2 is occupied, perform an \textit{s2}-contraction on the two variables in level 1, and a solution follows from Lemma \ref{doneafters2}.  Therefore, assume that level 2 is unoccupied and by normalization that level 3 is occupied.  By Lemma \ref{doneafters3}, assume level 4 is unoccupied, and so by normalization that level 3 contains two variables, and by Lemma \ref{doneafters2_2} with $k=1$ that they have the same $\pi$-coefficient.  The remaining variable is in level 5, and a solution follows from Lemma \ref{doneafters2} with $k=3$.
\end{proof}

The forms we consider have seven variables and six levels, and thus there is some level that contains at least two variables.  By Lemma \ref{max2}, this completes the proof of the theorem.

\bibliographystyle{unsrt}
\bibliography{biblio}
\end{document}